\newtheorem{thm}{Theorem}[section]
\newtheorem{remark}[thm]{Remark}
\newtheorem{notation}[thm]{Notation}
\newtheorem{defn}[thm]{Definition}
\newtheorem{lem}[thm]{Lemma}
\newtheorem{prop}[thm]{Proposition}
\begin{document}

% \title[short text for running head]{full title}
\title{Rank $n$ swapping algebra for Grassmannian}

%    Only \author and \address are required; other information is
%    optional.  Remove any unused author tags.

%    author one information
% \author[short version for running head]{name for top of paper}
\author{Zhe Sun}
\address{Department of mathematics, University of Luxembourg}
\email{sunzhe1985@gmail.com}
\thanks{The author was partially supported by the China Postdoctoral Science Foundation 2018T110084 and the Luxembourg National Research Fund(FNR) AFR bilateral grant COALAS 11802479-2. The author also acknowledge support from U.S. National Science Foundation grants DMS-1107452, 1107263, 1107367 “RNMS:GEometric structures And Representation varieties” (the GEAR Network).}
\keywords{Circle, Poisson algebra, Grassmannian, rank $n$ swapping algebra.}

\subjclass[2010]{Primary 17B63; Secondary 14M15}
%    \subjclass is required.

\date{}

%    Abstract is required.
\begin{abstract}
The {\em rank $n$ swapping algebra} is the Poisson algebra defined on the ordered pairs of points on a circle using the linking numbers, where a subspace of $(\mathbb{K}^n \times \mathbb{K}^{n*})^r/\operatorname{GL}(n,\mathbb{K})$ is its geometric model.
In this paper, we find an injective Poisson homomorphism from the Poisson algebra on Grassmannian $G_{n,r}$ arising from boundary measurement map to the rank $n$ swapping fraction algebra.
\end{abstract}

\maketitle

%    Text of article.

\section{Introduction}
We provide a geometric interpretation of bi-Poisson structure on the Grassmannian using a circle. 

In \cite{L18}, Labourie introduced the swapping algebra on ordered pairs of points on a circle using the linking numbers.
It was used to characterize the Goldman Poisson structure \cite{G84} on the character variety and the second Adler--Gel'fand--Dickey Poisson structure via Drinfel'd--Sokolov reduction \cite{DS81} on the space $Opers_n$ of $\operatorname{SL}(n,\mathbb{R})$-opers with trivial holonomy for any $n>1$. 
The ordered pairs of points on a circle should be understood as a pairing between a vector and a covector in a vector space $\mathbb{K}^n$ where $\mathbb{K}$ is a field of characteristic zero. When the dimension $n$ is fixed, by \cite{W39}\cite{CP76}, all these pairings generate the polynomial ring of a subspace of $(\mathbb{K}^n \times \mathbb{K}^{n*})^r/\operatorname{GL}(n,\mathbb{K})$. The relations among these pairings are generated by $(n+1)\times (n+1)$ determinant relations. In \cite{Su17}, we show that these polynomial relations define a Poisson ideal with respect to the swapping bracket and define the quotient ring $\mathcal{Z}_n(\mathcal{P})$ equipped with the swapping bracket, called {\em rank $n$ swapping algebra}. Actually, the swapping bracket depends on two parameters. We call it {\em $(\alpha,\beta)$-swapping bracket}, denoted by $\{\cdot,\cdot\}_{\alpha,\beta}$.

On the other hand, in \cite{GSV09}, Gekhtman et al. found a two dimensional family of Poisson brackets on the open Schubert cell of the Grassmannian induced from the boundary measurement map \cite{Po06}, denoted by $\{\cdot,\cdot\}_{B_{\alpha,\beta}}$. The parameters $(\alpha,\beta)$ are used to describe the $R$-matrix in \cite[Section 4]{GSV09}. Moreover, they show that these Poisson brackets are compatible with the natural cluster algebra structure \cite{FZ02}\cite{GSV03}, and the Grassmannian equipped with such Poisson bracket is a Poisson homogeneous space with respect to the natural action of $\operatorname{GL}_n$ equipped with an $R$-matrix Poisson--Lie structure \cite{Se83}. In \cite{GSSV12}, Gekhtman et al. show that every cluster algebra compatible Poisson structure on the open Schubert cell of the Grassmannian can be obtained as above.

Let $G_{n,r}^{\textbf{I}}$ be the open Schubert cell of the Grassmannian with respect to a $n$-element subset $\textbf{I}$ of $\{1,\cdots,r\}$ that is lexicographically minimal among all sets for which the Pl\"ucker coordinate is nonzero. Let $\mathcal{Q}_n(\mathcal{P})$ be the fraction field of $\mathcal{Z}_n(\mathcal{P})$ where $\#\mathcal{P}=r$. 
By sending the ratios of Pl\"ucker coordinates to the corresponding ratios of $(n\times n)$-determinants in $\mathcal{Q}_n(\mathcal{P})$
\begin{thm}[Main theorem]
There exists an injective Poisson algebra homomorphism from the coordinate ring of $G_{n,r}^{\textbf{I}}$ to $\mathcal{Q}_n(\mathcal{P})$ with respect to the Poisson bracket $\{\cdot,\cdot\}_{B_{\alpha,\beta}}$ and the $(\beta-\alpha,\alpha+\beta)$-swapping bracket.
\end{thm}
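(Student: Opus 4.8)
I would build the homomorphism on the matrix-form coordinates and then check the two harder properties (injectivity, Poisson compatibility) via the geometric model. Fix $\mathbf{I}$ lexicographically minimal with $P_{\mathbf I}\neq 0$; the coordinate ring of $G_{n,r}^{\mathbf I}$ is generated by the Pl\"ucker ratios $P_{\mathbf J}/P_{\mathbf I}$ and contains the polynomial ring $\mathbb{K}[y_{ij}]$ in the matrix-form entries $y_{ij}=\pm P_{\mathbf I\triangle\{a,i\}}/P_{\mathbf I}$, which will be enough for the Leibniz reduction below. In the geometric model of $\mathcal Z_n(\mathcal P)$ a point is a configuration $(x_p,\xi_p)_{p\in\mathcal P}\in(\mathbb K^n\times\mathbb K^{n*})^r$ modulo $\operatorname{GL}(n,\mathbb K)$, with $(p\,|\,q)=\langle\xi_p,x_q\rangle$. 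For $n$-subsets $\mathbf J,\mathbf K\subseteq\mathcal P$ the element $[\mathbf J]\langle\mathbf K\rangle:=\det\big[(j_a|k_b)\big]_{a,b}$ factors in the model as $\det(\xi_{j_1},\dots,\xi_{j_n})\cdot\det(x_{k_1},\dots,x_{k_n})$, so that $[\mathbf J]\langle\mathbf K\rangle/[\mathbf I]\langle\mathbf K\rangle\in\mathcal Q_n(\mathcal P)$ is the Pl\"ucker ratio of the covector configuration, for any $\mathbf K$ with $[\mathbf I]\langle\mathbf K\rangle\neq0$. I define the map by $P_{\mathbf J}/P_{\mathbf I}\mapsto[\mathbf J]\langle\mathbf K\rangle/[\mathbf I]\langle\mathbf K\rangle$ (say $\mathbf K=\mathbf I$), first checking independence of $\mathbf K$ and compatibility with the Pl\"ucker relations: by \cite{W39} the $(n+1)\times(n+1)$ determinantal relations defining $\mathcal Z_n(\mathcal P)$ are exactly the statement that the pairing matrix has rank $\leq n$, which is precisely what forces the images $[\mathbf J]\langle\mathbf K\rangle/[\mathbf I]\langle\mathbf K\rangle$ to obey all Pl\"ucker relations (they are honest ratios of $n\times n$ determinants of vectors in $\mathbb K^n$).

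\textbf{Injectivity.}
The image is the subalgebra of $\mathcal Q_n(\mathcal P)$ generated by the classes $[\mathbf J]\langle\mathbf K\rangle/[\mathbf I]\langle\mathbf K\rangle$; in the model these are exactly the $\operatorname{GL}(n,\mathbb K)$-invariant rational functions of $(x_p,\xi_p)$ that depend only on the $\xi_p$ and are regular on $\{P_{\mathbf I}\neq0\}$, i.e. the pullbacks under the dominant rational map $(\mathbb K^n\times\mathbb K^{n*})^r/\operatorname{GL}(n,\mathbb K)\longrightarrow(\mathbb K^{n*})^r/\operatorname{GL}(n,\mathbb K)$ forgetting the vectors. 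Since the coordinate ring of $G_{n,r}^{\mathbf I}$ is a domain and the map is pullback along a dominant map, it is injective; concretely, specializing the $x_p$ to a fixed generic frame turns each $\overline{y_{ij}}$ back into $y_{ij}$, so the kernel is zero.

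\textbf{Poisson compatibility.}
By \cite{Su17} the $(\beta-\alpha,\alpha+\beta)$-swapping bracket descends to $\mathcal Z_n(\mathcal P)$ and hence, via the quotient rule, to $\mathcal Q_n(\mathcal P)$; likewise $\{\cdot,\cdot\}_{B_{\alpha,\beta}}$ is a Poisson bracket on $\mathbb{K}[y_{ij}]$. By the Leibniz rule the claim reduces to the identity $\{y_{ij},y_{kl}\}_{B_{\alpha,\beta}}\mapsto\{\overline{y_{ij}},\overline{y_{kl}}\}_{\beta-\alpha,\alpha+\beta}$ on generators. On the left this is the explicit quadratic $R$-matrix bracket of \cite[Section 4]{GSV09}, whose coefficients are step functions of the linear order of the indices $i,k,j,l$ determined by the $R$-matrix with parameters $(\alpha,\beta)$. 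On the right, with $\mathbf K=\mathbf I$ one has $\overline{y_{ij}}=\pm(RB^{-1})_{ij}$, where $B=[(a|b)]_{a,b\in\mathbf I}$ and $R=[(i|b)]_{i\notin\mathbf I,\,b\in\mathbf I}$ are blocks of the $r\times r$ pairing matrix $M=[(p|q)]_{p,q\in\mathcal P}$. Using $\delta(B^{-1})=-B^{-1}(\delta B)B^{-1}$ together with the defining formula for the swapping bracket on the entries $(p|q)$ — quadratic in the pairings with coefficients given by linking numbers of the chords — I would expand $\{\overline{y_{ij}},\overline{y_{kl}}\}_{\beta-\alpha,\alpha+\beta}$ as a double sum over rows and columns. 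Placing the boundary points $1<\dots<r$ of $\mathcal P$ in convex position on the circle in their linear order (the configuration dictated by the boundary-measurement/plabic picture of \cite{Po06}\cite{GSV09}) turns all linking numbers into step functions of the linear order; the terms that are not of $R$-matrix type then either cancel in the Gauss-decomposition expansion or vanish modulo the rank $n$ determinantal relations, leaving an $R$-matrix bracket on the $\overline{y_{ij}}$ that one matches against the left-hand side, the matching fixing the linear change of parameters to be exactly $(\alpha,\beta)\mapsto(\beta-\alpha,\alpha+\beta)$.

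\textbf{Main obstacle.}
The crux is this last matching: propagating the convex-position linking-number bookkeeping through the Gauss decomposition $M\mapsto RB^{-1}$, verifying that every non-$R$-matrix cross-term disappears — this is where working in the rank $n$ quotient $\mathcal Z_n(\mathcal P)$ rather than in the free polynomial algebra on the pairings is essential — and tracking the sign and normalization conventions carefully enough that the two $R$-matrices agree under precisely the stated reparametrization and not some other linear substitution. A cleaner route, if it can be extracted from the earlier swapping-algebra literature, is to identify the $R$-matrix governing the convex-position swapping bracket on the pairing matrix and compare it directly with the Gekhtman--Shapiro--Vainshtein $R$-matrix, the descent of $R$-matrix brackets along a Gauss decomposition then being a formal computation.
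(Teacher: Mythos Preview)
Your construction of the map and the injectivity argument are essentially the same as the paper's: the homomorphism sends $m_{ij}$ to the $(n\times n)$-determinant ratio $E(\widehat{a_i}\,|\,a_j,a_i)$, and injectivity goes through the geometric model exactly as you sketch (the paper defers to \cite[Proposition~5.3]{Su15} and \cite[Theorem~4.6]{Su17}).

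For the Poisson compatibility, however, your route diverges from the paper's and, as you yourself flag, the crux is left unresolved. The paper does \emph{not} attempt a global $R$-matrix comparison or a Gauss-decomposition expansion of $RB^{-1}$. Instead it uses two devices that make the computation finite and clean. First, it splits linearly $\{\cdot,\cdot\}_{\beta-\alpha,\alpha+\beta}=(\beta-\alpha)\{\cdot,\cdot\}_{1,0}+(\alpha+\beta)\{\cdot,\cdot\}_{0,1}$ and handles the $(0,1)$-part by a short cocycle argument on linking numbers (Lemma~\ref{swc2} and Lemma~\ref{lemma:01}), which immediately yields the $\mathcal{J}(a_ia_j,a_{i'}a_{j'})$ term. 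Second, and this is the point your proposal misses, the determinant ratio $E(\widehat{a_i}\,|\,a_j,a_i)$ is \emph{independent of the right-side $n$-tuple} (Definition~\ref{defndr}, Remark~\ref{remark:rnt}); the paper exploits this by enlarging $\mathcal P$ and placing auxiliary tuples $(v_1,\dots,v_n)$, $(u_1,\dots,u_n)$ in carefully chosen arcs adjacent to $a_j$ and $a_{j'}$, so that almost all linking numbers in the expansion via Lemma~\ref{swcal} and Lemma~\ref{lemcalMB} become $0$ or $\pm\tfrac12$ and the case analysis collapses to a few lines per configuration.

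Your choice $\mathbf K=\mathbf I$ freezes the right-side tuple to points already in $\mathcal P$, forfeiting exactly this freedom; the resulting linking numbers are then genuine functions of the relative positions of \emph{all} of $a_{k_1},\dots,a_{k_n},a_j,a_{j'}$, and the ``non-$R$-matrix cross-terms'' you hope will cancel do not do so for any transparent reason. The cancellation in the paper is not a consequence of the rank-$n$ relations alone but of the placement of the auxiliary points. So the gap is real: either you must carry out the bookkeeping you describe in full (which without the auxiliary-point trick is substantially harder than the paper's computation), or you must actually produce the $R$-matrix identification you allude to at the end, which is a separate result not supplied here.
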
 

In section \ref{section:rn}, we recall the swapping algebra and the rank $n$ swapping algebra. In section \ref{gnm}, we recall the Poisson structure on the Grassmannian induced from boundary measurement map and then prove the main theorem.
\section{Rank $n$ swapping algebra}
\label{section:rn}

In this section, we recall the swapping algebra \cite{L18} and the rank $n$ swapping algebra \cite{Su17}. We prove some basic facts using the linking numbers on a circle. Lemma \ref{swcal} (\cite[Lemma 3.5, Remark 3.6]{Su17}) is the key technical formula for the computation. 
\subsection{Swapping algebra}

Let $\mathcal{P}$ be a cyclic finite subset of $S^1$. We represent an ordered pair $(r, x)$ of $\mathcal{P}$ by the expression $rx$. Then we consider the associative commutative ring 
\begin{equation*}
\mathcal{Z}(\mathcal{P}) := \mathbb{K}[\{xy\}_{ x,y \in \mathcal{P}}]/\{xx\}_{x \in \mathcal{P}}
\end{equation*}
 over a field $\mathbb{K}$ of characteristic zero, where $\{xy\}_{x,y \in \mathcal{P}}$ are the set of variables. If $\#\mathcal{P}$ has $k$ points on $S^1$, then $\mathcal{Z}(\mathcal{P})$ is isomorphic to $\mathbb{K}[x_1,\cdots,x_{k^2-k}]$. Thus $\mathcal{Z}(\mathcal{P})$ is an integral domain.
Then we equip $\mathcal{Z}(\mathcal{P})$ with a swapping bracket depending on the linking numbers, defined as follows.

\begin{defn}
[Linking number]
\label{defnlkn}
Let $(r, x, s, y)$ be a quadruple of points in $\mathcal{P}\subset S^1$. Let $o$ be any point different from $r,x,s,y \in S^1$. 
Let $\sigma$ be a homeomorphism from $S^1\backslash o$ to $\mathbb{R}$ with respect to the \textbf{anticlockwise} orientation of $S^1$. Let $\triangle(a)= -1; 0; 1$ whenever $a < 0$; $a = 0$; $a > 0$ respectively.

The {\em linking number} between $rx$ and $sy$ is
\begin{equation*}
\begin{aligned}
&\mathcal{J}(rx, sy): = \frac{1}{2}  \cdot \triangle(\sigma(r)-\sigma(x)) \cdot \triangle(\sigma(r)-\sigma(y)) \cdot \triangle(\sigma(y)-\sigma(x)) 
\\&- \frac{1}{2} \cdot \triangle(\sigma(r)-\sigma(x)) \cdot \triangle(\sigma(r)-\sigma(s)) \cdot \triangle(\sigma(s)-\sigma(x)).
\end{aligned}
\end{equation*}

\end{defn}
\begin{figure}
\includegraphics[scale=0.5]{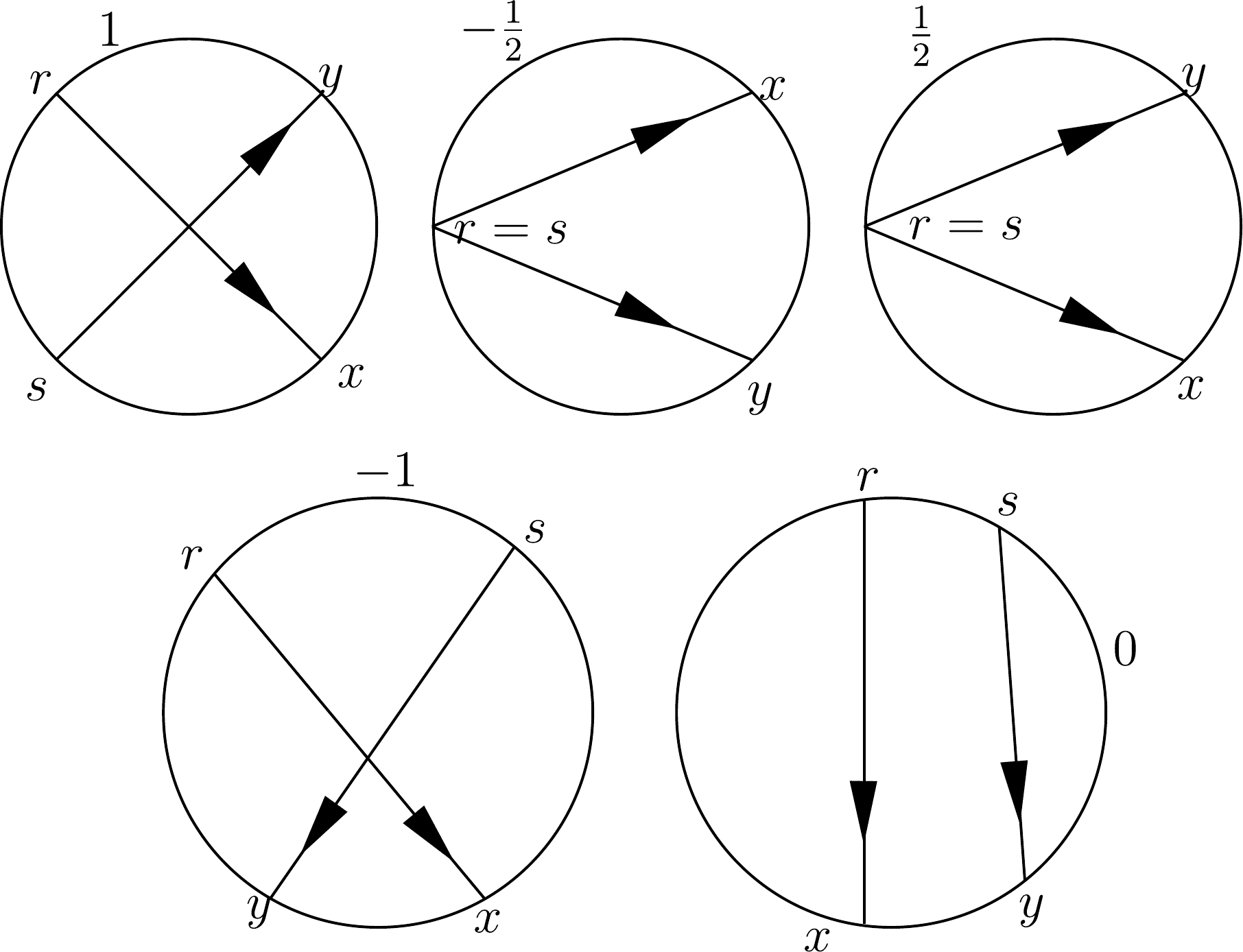}
\caption{Linking number $\mathcal{J}(rx, sy)$ between $rx$ and $sy$}
\label{swapswap1}
\end{figure}

In fact, the value of $\mathcal{J}(rx, sy)$ belongs to $\{0, \pm1, \pm \frac{1}{2}\}$, and does not depend on the choice of the point $o$ and depends only on the relative positions of $r,x,s,y$. In Figure \ref{swapswap1}, we describe five possible values of $\mathcal{J}(rx, sy)$.

\begin{defn}[$(\alpha,\beta)$-swapping bracket]
Let $\alpha,\beta \in \mathbb{K}$, the {\em $(\alpha,\beta)$-swapping bracket} over $\mathcal{Z}(\mathcal{P})$ is defined by extending the following formula on arbitrary generators $rx$, $sy$ to $\mathcal{Z}(\mathcal{P})$ using {\em Leibniz's rule}
\begin{equation*}
\{rx, sy\}_{\alpha,\beta} = \mathcal{J}(rx, sy) \cdot \left(\alpha \cdot ry \cdot sx + \beta \cdot rx \cdot sy\right).
\end{equation*}
We denote the $(1,0)$-swapping bracket by $\{\cdot,\cdot\}$, called {\em the swapping bracket}.
\end{defn}

It is easy to see that 
\begin{equation}
\label{equation:add}
\{\cdot, \cdot\}_{\alpha,\beta}= \alpha \{\cdot, \cdot\}_{1, 0}+ \beta \{\cdot, \cdot\}_{0, 1}.
\end{equation}
By direct computations, Labourie proved the following theorem.
\begin{thm}
\cite{L18}
\label{swappoisson}
The $(\alpha,\beta)$-swapping bracket is Poisson.
\end{thm}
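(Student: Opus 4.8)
The plan is to verify the three defining properties of a Poisson bracket in turn, exploiting that $\{\cdot,\cdot\}_{\alpha,\beta}$ is prescribed on generators and then extended by Leibniz's rule. Bilinearity over $\mathbb{K}$ is immediate from the defining formula, and the Leibniz rule in each slot holds by construction, so $\{\cdot,\cdot\}_{\alpha,\beta}$ is automatically a biderivation of $\mathcal{Z}(\mathcal{P})$. For skew-symmetry it suffices to compare the two brackets on generators: the quadratic factor $\alpha\cdot ry\cdot sx+\beta\cdot rx\cdot sy$ is manifestly invariant under the exchange $(r,x)\leftrightarrow(s,y)$, so skew-symmetry of the bracket is equivalent to the single identity $\mathcal{J}(rx,sy)=-\mathcal{J}(sy,rx)$. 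This I would read off directly from Definition \ref{defnlkn}, using that $\triangle$ is an odd sign function and that each summand is a product of three such signs; equivalently it can be checked against the five configurations recorded in Figure \ref{swapswap1}.

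The substance of the theorem is the Jacobi identity. Since the bracket is a biderivation, the Jacobiator
\[
J_{\alpha,\beta}(a,b,c):=\{a,\{b,c\}_{\alpha,\beta}\}_{\alpha,\beta}+\{b,\{c,a\}_{\alpha,\beta}\}_{\alpha,\beta}+\{c,\{a,b\}_{\alpha,\beta}\}_{\alpha,\beta}
\]
is itself a derivation in each of its three arguments, hence vanishes on all of $\mathcal{Z}(\mathcal{P})$ as soon as it vanishes on generators $a=rx$, $b=sy$, $c=tz$. Because $\{\cdot,\cdot\}_{\alpha,\beta}$ is linear in $(\alpha,\beta)$ by (\ref{equation:add}) and the Jacobiator composes the bracket with itself, $J_{\alpha,\beta}$ is homogeneous of degree two in $(\alpha,\beta)$, and I would decompose it as
\[
J_{\alpha,\beta}=\alpha^2\,J_{1,0}+\alpha\beta\,J^{\mathrm{mix}}+\beta^2\,J_{0,1},
\]
where $J_{1,0}$ and $J_{0,1}$ are the Jacobiators of the two basic brackets and $J^{\mathrm{mix}}$ is the cross term. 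Vanishing for all parameters is then equivalent to three separate statements: $\{\cdot,\cdot\}_{1,0}$ is Poisson, $\{\cdot,\cdot\}_{0,1}$ is Poisson, and the two brackets are compatible, i.e. $J^{\mathrm{mix}}=0$ on generators, so that they form a Poisson pencil. This reduction is also the structural reason the bi-Poisson family $\{\cdot,\cdot\}_{\alpha,\beta}$ behaves well.

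Each of these three identities is a polynomial relation among the degree-two monomials in the generators attached to the six points $r,x,s,y,t,z$, with coefficients that are products of two linking numbers $\mathcal{J}$. I would verify them by a case analysis over the cyclic arrangements of these six points on $S^1$ (allowing coincidences), organized by the dihedral symmetry of the circle together with the skew-symmetry of $\mathcal{J}$ established above, which collapses the a priori large number of orderings into a few representative classes. The key computational input is Lemma \ref{swcal}, which evaluates the iterated brackets and the relevant products of linking numbers in a uniform closed form; feeding this into each configuration shows that the quadratic monomials cancel in pairs. The main obstacle I anticipate is precisely this bookkeeping, and in particular the mixed term $J^{\mathrm{mix}}$, where monomials of the two shapes $ry\cdot sx$ and $rx\cdot sy$ interact; the real work is to arrange the configurations so that the cancellations are forced by the symmetry of $\mathcal{J}$ rather than checked by brute enumeration.
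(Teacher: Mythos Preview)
The paper does not actually prove Theorem~\ref{swappoisson}; it attributes the result to Labourie \cite{L18} with the single sentence ``By direct computations, Labourie proved the following theorem.'' So there is no argument in the paper to compare against beyond that phrase.

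Your overall plan is the natural reading of ``direct computations'': bilinearity and Leibniz are free, skew-symmetry on generators reduces to $\mathcal{J}(rx,sy)=-\mathcal{J}(sy,rx)$, and the Jacobi identity reduces (using that the Jacobiator of a skew biderivation is a derivation in each slot, which you correctly note requires skew-symmetry already established) to a finite check on triples of generators. The quadratic decomposition in $(\alpha,\beta)$ into $J_{1,0}$, $J_{0,1}$ and the cross term is a clean way to organize that check and is precisely the compatibility underlying the pencil.

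One concrete misstep: you cite Lemma~\ref{swcal} as ``the key computational input'' that ``evaluates the iterated brackets and the relevant products of linking numbers in a uniform closed form.'' It does neither. Lemma~\ref{swcal} computes the single swapping bracket of a generator $ab$ against an $m\times m$ determinant $\Delta((x_1,\dots,x_m),(y_1,\dots,y_m))$; it says nothing about iterated brackets of three generators, and the Jacobiator on $rx,sy,tz$ never produces such a determinant. The inner bracket $\{sy,tz\}_{\alpha,\beta}$ is a linear combination of the monomials $sz\cdot ty$ and $sy\cdot tz$, not a $2\times2$ minor, so Lemma~\ref{swcal} does not apply. What the Jacobi check actually needs is the bare expansion of $\{rx,\{sy,tz\}_{\alpha,\beta}\}_{\alpha,\beta}+\text{cyclic}$ into degree-three monomials with coefficients of the shape $\mathcal{J}(\cdot,\cdot)\,\mathcal{J}(\cdot,\cdot)$, followed by a case analysis on the cyclic order of $r,x,s,y,t,z$; the identities of $\mathcal{J}$ that drive the cancellations are its skew-symmetry and the cocycle relation $\mathcal{J}(ab,cd)+\mathcal{J}(ab,de)+\mathcal{J}(ab,ec)=0$ quoted in the proof of Lemma~\ref{lem:swap01}, not Lemma~\ref{swcal}. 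Replace that reference accordingly and your outline matches what Labourie's ``direct computations'' must be.
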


Let $\mathcal{Q}(\mathcal{P})$ be fraction field of $\mathcal{Z}(\mathcal{P})$. The $(\alpha,\beta)$-swapping bracket can be extended to $\mathcal{Q}(\mathcal{P})$, since by Leibniz's rule, for any $A,B\in \mathcal{Z}(\mathcal{P})$
\begin{equation*}
\left\{A,\frac{1}{B}\right\}_{\alpha,\beta} = -\frac{\{A,B\}_{\alpha,\beta}}{B^2}.
\end{equation*}

\begin{defn}[Cross fraction]
Let $x,y,z,t$ belong to $\mathcal{P}$ so that $x\neq t$ and $y \neq z$. The {\em cross fraction} of $(x,y,z,t)$ is an element of $\mathcal{Q}(\mathcal{P})$: 
\begin{equation*}
\frac{xz}{xt} \cdot \frac{yt}{yz}.
\end{equation*}
Let $\mathcal{B}(\mathcal{P})$ be the subring of $\mathcal{Q}(\mathcal{P})$ generated by all the cross fractions.
\end{defn}

\begin{defn}
 {\em The $(\alpha,\beta)$-swapping fraction (multifraction resp.) algebra of $\mathcal{P}$} is $\mathcal{Q}(\mathcal{P})$ ($\mathcal{B}(\mathcal{P})$ resp.) equipped with the $(\alpha,\beta)$-swapping bracket, denoted by $(\mathcal{Q}(\mathcal{P}),\{\cdot, \cdot\}_{\alpha,\beta})$ ($(\mathcal{B}(\mathcal{P}),\{\cdot, \cdot\}_{\alpha,\beta})$ resp.).

\end{defn}
We will prove that $\mathcal{B}(\mathcal{P})$ is closed under $\{\cdot,\cdot\}_{\alpha,\beta}$ as follows.

\begin{lem}
\label{lem:swap01}
For any $ab, \frac{xz}{xt} \cdot \frac{yt}{yz} \in \mathcal{Q}(\mathcal{P})$, we have 
\begin{equation*}
\left\{ab, \frac{xz}{xt} \cdot \frac{yt}{yz}\right\}_{0,1}=0.
\end{equation*}

As a consequence, for any $\mu, \nu  \in \mathcal{B}(\mathcal{P})$, we obtain
\begin{equation*}
\{\mu, \nu\}_{0,1}=0.
\end{equation*}
\end{lem}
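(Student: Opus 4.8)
The plan is to reduce the first identity, via Leibniz's rule, to a telescoping cancellation of linking numbers. Expanding with Leibniz's rule together with the quotient formula $\{A,1/B\}_{0,1}=-\{A,B\}_{0,1}/B^{2}$ one writes
\[
\left\{ab,\ \frac{xz}{xt}\cdot\frac{yt}{yz}\right\}_{0,1}
=\frac{\{ab,\,xz\cdot yt\}_{0,1}}{xt\cdot yz}
-\frac{xz\cdot yt\,\{ab,\,xt\cdot yz\}_{0,1}}{(xt\cdot yz)^{2}} .
\]
The decisive structural feature of the $(0,1)$-bracket is that $\{ab,\,pq\}_{0,1}=\mathcal{J}(ab,pq)\cdot ab\cdot pq$, so the monomial $ab\cdot pq$ simply reappears; hence $\{ab,\,xz\cdot yt\}_{0,1}=ab\cdot xz\cdot yt\,(\mathcal{J}(ab,xz)+\mathcal{J}(ab,yt))$, and likewise with the pair $(x,t)$ and with $(y,z)$. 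Substituting, the common factor $\dfrac{ab\cdot xz\cdot yt}{xt\cdot yz}$, which is $ab$ times the given cross fraction, pulls out, and the bracket equals that factor times
\[
\mathcal{J}(ab,xz)+\mathcal{J}(ab,yt)-\mathcal{J}(ab,xt)-\mathcal{J}(ab,yz).
\]
So it remains to show that this alternating sum of linking numbers is $0$.

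For that, fix an auxiliary point $o$ and a homeomorphism $\sigma$ as in Definition~\ref{defnlkn}, and specialize the defining formula of $\mathcal{J}$ to first entry $ab$:
\[
\mathcal{J}(ab,\,sy)=\tfrac12\,\triangle(\sigma(a)-\sigma(b))\,\bigl(f(y)-f(s)\bigr),\qquad
f(p):=\triangle(\sigma(a)-\sigma(p))\,\triangle(\sigma(p)-\sigma(b)).
\]
Taking $(s,y)$ to be $(x,z)$, $(y,t)$, $(x,t)$, $(y,z)$ produces $f(z)-f(x)$, $f(t)-f(y)$, $f(t)-f(x)$, $f(z)-f(y)$ respectively, so the combination $(f(z)-f(x))+(f(t)-f(y))-(f(t)-f(x))-(f(z)-f(y))$ telescopes to $0$. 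This needs no case analysis on the cyclic order of $a,b,x,y,z,t$ and is valid even when some of these points coincide, which settles the first displayed equation of the lemma.

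For the consequence, recall that $\{\cdot,\cdot\}_{0,1}$ obeys Leibniz's rule in each argument and extends to $\mathcal{Q}(\mathcal{P})$ through the quotient rule. Since $\{ab,\,c\}_{0,1}=0$ for every generator $ab$ and every cross fraction $c$ by what was just proved, Leibniz in the first slot gives $\{A,c\}_{0,1}=0$ for every polynomial $A$ in the generators, and then the quotient rule gives $\{A,c\}_{0,1}=0$ for every $A\in\mathcal{Q}(\mathcal{P})$; by antisymmetry also $\{c,A\}_{0,1}=0$. Writing an arbitrary $\nu\in\mathcal{B}(\mathcal{P})$ as a polynomial in cross fractions and applying Leibniz in the second slot yields $\{\mu,\nu\}_{0,1}=0$ for all $\mu\in\mathcal{Q}(\mathcal{P})$, $\nu\in\mathcal{B}(\mathcal{P})$, in particular for $\mu,\nu\in\mathcal{B}(\mathcal{P})$.

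I do not anticipate a real obstacle. The one point requiring care is verifying that the four terms genuinely carry the \emph{same} monomial prefactor; this works precisely because the $\beta$-part of the swapping bracket is ``diagonal'', in the sense that $\{rx,sy\}_{0,1}$ is a scalar multiple of $rx\cdot sy$. The linking-number cancellation should be checked through the specialized formula above rather than by enumerating positions on the circle.
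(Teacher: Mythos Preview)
Your proof is correct and follows essentially the same route as the paper's: apply Leibniz's rule to reduce the bracket to the alternating sum $\mathcal{J}(ab,xz)+\mathcal{J}(ab,yt)-\mathcal{J}(ab,xt)-\mathcal{J}(ab,yz)$, then show this vanishes. The only difference lies in how the vanishing is checked. The paper invokes the cocycle identity $\mathcal{J}(ab,cd)+\mathcal{J}(ab,de)+\mathcal{J}(ab,ec)=0$ from \cite{L18} to rewrite the sum as $\mathcal{J}(ab,tz)+\mathcal{J}(ab,zt)=0$, whereas you go back to Definition~\ref{defnlkn} and observe that $\mathcal{J}(ab,sy)=\tfrac12\,\triangle(\sigma(a)-\sigma(b))\,(f(y)-f(s))$ is a coboundary in $s,y$, so the four terms telescope. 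This is effectively an inline proof of the cocycle identity, so the two arguments are equivalent; yours is self-contained, the paper's is shorter by quoting the identity. Your treatment of the ``consequence'' part via Leibniz in each slot is also fine (the paper leaves this implicit).
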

\begin{proof}
By Leibniz's rule, we have 
\begin{equation}
\label{equation:01}
\frac{\big\{ab, \frac{xz}{xt} \cdot \frac{yt}{yz}\big\}_{0,1}}{ab \cdot \frac{xz}{xt} \cdot \frac{yt}{yz}} 
=\frac{\big\{ab, xz\big\}_{0,1}}{ab \cdot xz}  - \frac{\big\{ab, xt\big\}_{0,1}}{ab \cdot xt} + \frac{\big\{ab, yt\big\}_{0,1}}{ab \cdot yt}  - \frac{\big\{ab, yz\big\}_{0,1}}{ab \cdot yz}.
\end{equation}
By the cocycle identity of the linking number \cite{L18} Definition 2.1.1(8):
\begin{equation*}
\mathcal{J}(ab,cd) + \mathcal{J}(ab,de) + \mathcal{J}(ab,ec)=0,
\end{equation*} 
the right hand side of Equation~\eqref{equation:01} equals
\begin{equation*}
\begin{aligned}
&\mathcal{J}(ab,xz) - \mathcal{J}(ab,xt) + \mathcal{J}(ab,yt) - \mathcal{J}(ab,yz)
\\&= \mathcal{J}(ab,tz) + \mathcal{J}(ab,zt) 
\\&=0.
\end{aligned}
\end{equation*}
\end{proof}
As a consequence, we obtain
\begin{prop}
The ring $\mathcal{B}(\mathcal{P})$ is closed under $\{\cdot,\cdot\}_{\alpha,\beta}$.
\end{prop}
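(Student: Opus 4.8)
The plan is to reduce, by $\mathbb{K}$-bilinearity and Leibniz's rule, to a single explicit identity for the bracket of two generators. First I would record the reduction to cross fractions. Since $\mathcal{B}(\mathcal{P})$ is the $\mathbb{K}$-subalgebra of $\mathcal{Q}(\mathcal{P})$ generated by the cross fractions, and $\{\cdot,\cdot\}_{\alpha,\beta}$ is $\mathbb{K}$-bilinear and obeys Leibniz's rule, it suffices to show that $\{\mu,\nu\}_{\alpha,\beta}\in\mathcal{B}(\mathcal{P})$ for any two cross fractions $\mu=\frac{xz}{xt}\cdot\frac{yt}{yz}$ and $\nu=\frac{x'z'}{x't'}\cdot\frac{y't'}{y'z'}$. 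Indeed, for a fixed cross fraction $\mu$ the set of $f\in\mathcal{B}(\mathcal{P})$ with $\{\mu,f\}_{\alpha,\beta}\in\mathcal{B}(\mathcal{P})$ is a $\mathbb{K}$-subalgebra of $\mathcal{B}(\mathcal{P})$ (it contains the constants and is stable under sums and products, by Leibniz), hence equals $\mathcal{B}(\mathcal{P})$ as soon as it contains every cross fraction; repeating the argument in the first slot then gives stability of $\{\cdot,\cdot\}_{\alpha,\beta}$ on all of $\mathcal{B}(\mathcal{P})$.

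Next I would compute $\{\mu,\nu\}_{\alpha,\beta}$ via its logarithmic derivative, exactly as in the proof of Lemma~\ref{lem:swap01}. Writing $\{\mu,\nu\}_{\alpha,\beta}=\mu\nu\cdot\big(\{\mu,\nu\}_{\alpha,\beta}/(\mu\nu)\big)$, which is legitimate because $\mu\nu\in\mathcal{B}(\mathcal{P})$, and using that $f\mapsto\{\mu,f\}_{\alpha,\beta}/(\mu f)$ behaves like a logarithmic derivative in $f$ (it turns products into sums and reciprocals into negatives), the factor $\{\mu,\nu\}_{\alpha,\beta}/(\mu\nu)$ becomes a signed sum of the four quantities $\{\mu,cd\}_{\alpha,\beta}/(\mu\cdot cd)$ with $cd\in\{x'z',x't',y't',y'z'\}$; antisymmetry rewrites each as $-\{cd,\mu\}_{\alpha,\beta}/(cd\cdot\mu)$, and expanding $\mu$ the same way produces a signed sum of sixteen terms $\{cd,ef\}_{\alpha,\beta}/(cd\cdot ef)$ with $ef\in\{xz,xt,yt,yz\}$. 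The defining formula of the $(\alpha,\beta)$-swapping bracket then gives
\begin{equation*}
\frac{\{cd,ef\}_{\alpha,\beta}}{cd\cdot ef}=\mathcal{J}(cd,ef)\left(\alpha\cdot\frac{cf}{cd}\cdot\frac{ed}{ef}+\beta\right),
\end{equation*}
in which $\frac{cf}{cd}\cdot\frac{ed}{ef}$ is exactly the cross fraction of $(c,e,f,d)$ — a coincidence among these four points only makes the term $0$ or $1$ — while $\mathcal{J}(cd,ef)\in\mathbb{K}\subseteq\mathcal{B}(\mathcal{P})$. Hence each of the sixteen terms lies in $\mathcal{B}(\mathcal{P})$, so $\{\mu,\nu\}_{\alpha,\beta}/(\mu\nu)\in\mathcal{B}(\mathcal{P})$ and therefore $\{\mu,\nu\}_{\alpha,\beta}\in\mathcal{B}(\mathcal{P})$.

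I do not expect a serious obstacle here: once the displayed identity is in hand — and it is immediate from the definitions of the bracket and of a cross fraction — the rest is bookkeeping of signs in the sixteen-term expansion and of the degenerate index configurations. If one prefers the organisation suggested by the preceding material, one may instead invoke \eqref{equation:add} together with the second assertion of Lemma~\ref{lem:swap01} to discard the $\{\cdot,\cdot\}_{0,1}$ component from the start, reducing to the swapping bracket $\{\cdot,\cdot\}_{1,0}$; then the $\beta$-term above disappears and one recovers Labourie's statement that the multifraction algebra is a Poisson subalgebra of the swapping algebra \cite{L18}.
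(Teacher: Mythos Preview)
Your argument is correct and self-contained. The paper's own proof is shorter but less self-contained: it first splits $\{\cdot,\cdot\}_{\alpha,\beta}=\alpha\{\cdot,\cdot\}_{1,0}+\beta\{\cdot,\cdot\}_{0,1}$ via Equation~\eqref{equation:add}, kills the $(0,1)$-part using Lemma~\ref{lem:swap01}, and then \emph{cites} \cite[Proposition~2.9]{Su17} for the closure of $\mathcal{B}(\mathcal{P})$ under $\{\cdot,\cdot\}_{1,0}$. Your direct sixteen-term expansion effectively reproves that cited proposition on the spot (and you even note this alternative organisation in your final paragraph). What the paper's route buys is brevity and a clean separation of the two Poisson structures; what your route buys is that the reader need not consult the external reference, and one sees concretely why each term lands in $\mathcal{B}(\mathcal{P})$: the quotient $\frac{cf}{cd}\cdot\frac{ed}{ef}$ is itself a cross fraction. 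One small remark: the logarithmic-derivative expansion presupposes that none of the eight factors $xz,xt,yt,yz,x'z',x't',y't',y'z'$ vanishes; the denominator factors are nonzero by the definition of a cross fraction, and if a numerator factor vanishes then $\mu$ or $\nu$ is zero and the bracket is trivially in $\mathcal{B}(\mathcal{P})$. You allude to this with your ``degenerate index configurations'' comment, but it is worth making explicit.
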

\begin{proof}
For any $\mu, \nu \in \mathcal{B}(\mathcal{P})$, by Lemma \ref{lem:swap01}, we have
\begin{equation*}
\{\mu, \nu\}_{\alpha,\beta} 
=\alpha \cdot \{\mu, \nu\}_{1,0}+ \beta \cdot \{\mu, \nu\}_{0,1}
=\alpha \cdot \{\mu, \nu\}_{1,0}.
\end{equation*}
By \cite{Su17} Proposition 2.9, we obtain $\alpha \cdot \{\mu, \nu\}_{1,0}$ belongs to $\mathcal{B}(\mathcal{P})$. Hence we conclude that $\mathcal{B}(\mathcal{P})$ is closed under $\{\cdot,\cdot\}_{\alpha,\beta}$. \end{proof}
\subsection{Rank $n$ swapping algebra}
\label{rsa}
\begin{notation}
For any $d>1$ and any $x_1,\cdots,x_d, y_1, \cdots, y_d \in \mathcal{P}$, fix the notation
\begin{equation*}
\Delta\left((x_1,\cdots,x_d), (y_1, \cdots, y_d)\right): = \det   \left(\begin{array}{cccc}
       x_1 y_1 & \cdots & x_1 y_d \\
       \cdots & \cdots & \cdots \\
       x_d y_1 & \cdots & x_d y_d
     \end{array}\right).
\end{equation*} 
We call $(x_1,\cdots,x_d)$ ($(y_1,\cdots,y_d)$ resp) the {\em left (right resp.) side $n$-tuple of} the determinant $\Delta\left((x_1,\cdots,x_d), (y_1, \cdots, y_d)\right)$.
\end{notation}

\begin{defn}[Rank $n$ swapping ring $\mathcal{Z}_n(\mathcal{P})$]
For $n\geq 2$, let $R_n(\mathcal{P})$ be the ideal of $\mathcal{Z}(\mathcal{P})$ generated by 
\begin{align*}
\left\{ D \in \mathcal{Z}(\mathcal{P}) \; \bigg| 
\begin{array}{c}
\; D = \Delta\left((x_1,\cdots,x_{n+1}), (y_1, \cdots, y_{n+1})\right) ,\\ \forall x_1, \cdots, x_{n+1}, y_1,\cdots, y_{n+1} \in \mathcal{P}
\end{array}
 \right\}.
\end{align*}

The {\em rank n swapping ring} $\mathcal{Z}_n(\mathcal{P})$ is the quotient ring $\mathcal{Z}(\mathcal{P})/R_n(\mathcal{P})$.
\end{defn}

\begin{figure}
\includegraphics[scale=0.5]{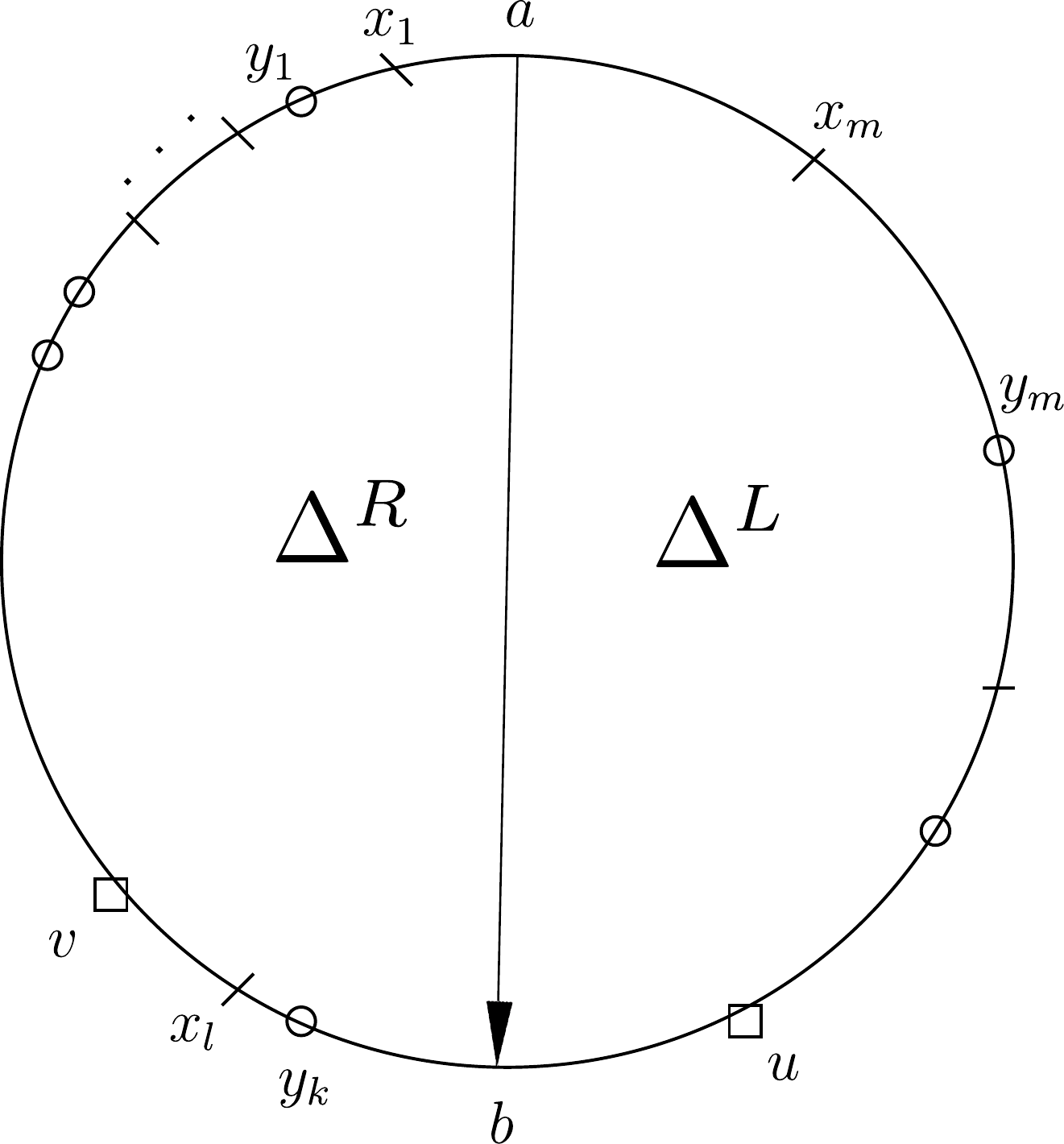}
\caption{$\{ab, \Delta((x_1, \cdots, x_{m}), (y_1,\cdots, y_{m}))\}$}
\label{swapabdet1}
\end{figure}

\begin{lem}\cite[Lemma 3.5, Remark 3.6]{Su17}
\label{swcal}
For any integer $m \geq 2$, suppose $x_1,\cdots,x_{m}$ $(y_1,\cdots,y_{m} \; resp. )$ in $\mathcal{P}$ are mutually distinct and anticlockwise ordered. Assume that $a,b$ belong to $\mathcal{P}$ and $x_{1},\cdots,x_{l}, y_{1},\cdots$, $y_{k}$ are on the \textbf{right} side of the oriented edge $\overrightarrow{ab}$ (include coinciding with $a$ or $b$) as illustrated in Figure \ref{swapabdet1}.
  Let $u$ ($v$ resp.) be strictly on the left (right resp.) side of $\overrightarrow{ab}$. Let
\begin{equation}
\begin{aligned}
\label{equation:R}
&\Delta^R(ab)= \sum_{d=1}^l \mathcal{J}(ab, x_d u)\cdot x_d b \cdot \Delta((x_1, \cdots,x_{d-1}, a, x_{d+1},\cdots,x_{m}), (y_1,\cdots, y_{m}))
  \\&+ \sum_{d=1}^k \mathcal{J}(ab, u y_d)\cdot a y_d \cdot \Delta((x_1, \cdots,x_{m}), (y_1,\cdots,y_{d-1}, b, y_{d+1},\cdots, y_{m})),
\end{aligned}
\end{equation}
\begin{equation*}
\begin{aligned}
&\Delta^L(ab)= \sum_{d=k+1}^{m} \mathcal{J}(ab, x_d v)\cdot x_d b \cdot \Delta((x_1, \cdots,x_{d-1}, a, x_{d+1},\cdots,x_{m}), (y_1,\cdots, y_{m}))
  \\&+ \sum_{d=l+1}^{m} \mathcal{J}(ab, v y_d)\cdot a y_d \cdot \Delta((x_1, \cdots,x_{m}), (y_1,\cdots,y_{d-1}, b, y_{d+1},\cdots, y_{m})),
\end{aligned}
\end{equation*}

then we have
\begin{equation*}
\{ab, \Delta((x_1, \cdots, x_{m}), (y_1,\cdots, y_{m}))\}
   = \Delta^R(ab)=\Delta^L(ab).
\end{equation*}

\end{lem}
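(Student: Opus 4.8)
The plan is to compute the bracket $\{ab, \Delta((x_1,\dots,x_m),(y_1,\dots,y_m))\}$ by expanding the determinant along the Leibniz rule and regrouping terms into cofactor-weighted combinations that themselves assemble into determinants of the same size with one row or column replaced. Concretely, first I would write $\Delta = \sum_{\pi} \operatorname{sgn}(\pi) \prod_i x_i y_{\pi(i)}$ and apply the derivation $\{ab,-\}$ to get $\{ab,\Delta\} = \sum_{i,j} \{ab, x_i y_j\}\, C_{ij}$, where $C_{ij}$ is the $(i,j)$-cofactor of the matrix $(x_p y_q)$. Then I would substitute the defining formula $\{ab, x_i y_j\} = \mathcal{J}(ab, x_i y_j)\cdot ab\cdot x_i y_j$ — wait, more precisely the $(1,0)$-bracket gives $\{ab,x_iy_j\} = \mathcal{J}(ab,x_iy_j)\cdot a y_j\cdot x_i b$ — and the key observation is that $a y_j \cdot x_i b \cdot C_{ij}$, when summed over $i$ against the cofactors, is exactly the Laplace expansion of the determinant obtained from $\Delta$ by replacing the $i$-th row's entries $x_i y_q$ by $a y_q$ (contributing the column-independent factor) — so the sum over $i$ of $x_i b\cdot C_{ij}$-type terms collapses into $\Delta((x_1,\dots,a,\dots,x_m),(y_1,\dots,y_m))$ up to the bookkeeping factors $x_i b$ and $\mathcal{J}$.

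The heart of the argument is the cocycle identity for the linking number together with the geometric dichotomy induced by the oriented chord $\overrightarrow{ab}$. For a fixed pair $(i,j)$, the linking number $\mathcal{J}(ab, x_i y_j)$ can be split, using the cocycle relation $\mathcal{J}(ab,cd) = \mathcal{J}(ab,cw) + \mathcal{J}(ab,wd)$ for an auxiliary point $w$, into a contribution depending only on $x_i$ relative to $\overrightarrow{ab}$ and one depending only on $y_j$; this is where the auxiliary points $u$ (on the left) and $v$ (on the right) enter. After this splitting, each term is attached either to a replacement in the left $m$-tuple (the $x$'s) or in the right $m$-tuple (the $y$'s), and the sign/position of $x_d$ or $y_d$ relative to $\overrightarrow{ab}$ determines whether the relevant linking number is computed against $u$ or against $v$. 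Summing the "$x$-type" terms over all columns reconstitutes $\Delta((x_1,\dots,x_{d-1},a,x_{d+1},\dots,x_m),(y_1,\dots,y_m))$ via Laplace expansion along the $d$-th row, and similarly the "$y$-type" terms reconstitute the column-replaced determinants. Collecting the pieces with $x_d, y_d$ on the right side of $\overrightarrow{ab}$ yields $\Delta^R(ab)$, and those on the left yield $\Delta^L(ab)$.

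Finally, the equality $\Delta^R(ab) = \Delta^L(ab)$ is forced because both equal the single quantity $\{ab,\Delta\}$: once the cocycle splitting is performed with the left auxiliary point $u$ one lands on the $\Delta^R$ grouping, and performing it instead with the right auxiliary point $v$ — or equivalently using $\mathcal{J}(ab,cd) = -\mathcal{J}(ab, cv) - \mathcal{J}(ab, vd) \pmod{}$ the cocycle relation, noting that $\mathcal{J}(ab, w) $ for $w$ strictly on one side has a fixed sign — lands on the $\Delta^L$ grouping, so the two expressions are identically equal. I expect the main obstacle to be the careful sign and position bookkeeping: verifying that the coefficient $\mathcal{J}(ab, x_d u)$ versus $\mathcal{J}(ab, x_d v)$ picks out exactly the right sign depending on which side of $\overrightarrow{ab}$ the point $x_d$ lies, and that the cofactor reassembly into row/column-replaced determinants carries the correct global sign; the points coinciding with $a$ or $b$ (the boundary cases in "include coinciding with $a$ or $b$") must be checked separately since there the linking number takes a half-integer value and the degenerate determinant factors need to be reconciled. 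Since this computation is carried out in detail in \cite[Lemma 3.5, Remark 3.6]{Su17}, I would cite it and only indicate the structure above.
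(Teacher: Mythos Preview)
The paper itself gives no proof of this lemma; it is stated with the citation \cite[Lemma 3.5, Remark 3.6]{Su17} and immediately followed by the next lemma. Your proposal to cite \cite{Su17} and sketch the structure therefore matches the paper's treatment exactly, and the sketch you give---Leibniz expansion in cofactors, the cocycle splitting $\mathcal{J}(ab,x_iy_j)=\mathcal{J}(ab,x_iu)+\mathcal{J}(ab,uy_j)$, Laplace reassembly into row- and column-replaced determinants, and the vanishing of $\mathcal{J}(ab,x_du)$ when $x_d$ lies on the same side of $\overrightarrow{ab}$ as $u$---is a correct outline of the argument in \cite{Su17}.
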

\begin{lem}
\label{swc2}
With the same notations as in the lemma above, we have
\begin{eqnarray*}
\{ab, \Delta((x_1, \cdots, x_{m}), (y_1,\cdots, y_{m}))\}_{0,1}
= K\cdot  ab \cdot \Delta((x_1, \cdots, x_{m}), (y_1,\cdots, y_{m})),
\end{eqnarray*}
for some constant $K$.
\end{lem}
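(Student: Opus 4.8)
The plan is to expand $\Delta$ by the Leibniz formula and distribute the bracket with Leibniz's rule, using that the $(0,1)$-bracket merely rescales each generator, $\{ab,sy\}_{0,1}=\mathcal{J}(ab,sy)\cdot ab\cdot sy$. Writing $\Delta=\sum_{\pi\in S_m}\operatorname{sgn}(\pi)\prod_{i=1}^m x_i y_{\pi(i)}$, Leibniz's rule gives
\[
\{ab,\Delta\}_{0,1}
= \sum_{\pi\in S_m}\operatorname{sgn}(\pi)\sum_{j=1}^m\Big(\prod_{i\neq j}x_i y_{\pi(i)}\Big)\{ab,x_j y_{\pi(j)}\}_{0,1}
= ab\cdot\sum_{\pi\in S_m}\operatorname{sgn}(\pi)\Big(\sum_{j=1}^m\mathcal{J}(ab,x_j y_{\pi(j)})\Big)\prod_{i=1}^m x_i y_{\pi(i)}.
\]
Thus everything reduces to the scalars $c_\pi:=\sum_{j=1}^m\mathcal{J}(ab,x_j y_{\pi(j)})$, and it suffices to show that $c_\pi$ does not depend on $\pi$.

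The key step is this permutation-independence. First I would fix an auxiliary point $z_0\in S^1$ distinct from all points of $\mathcal{P}$. The cocycle identity of the linking number (\cite{L18}, Definition 2.1.1(8)), already used in the proof of Lemma~\ref{lem:swap01}, together with the antisymmetry $\mathcal{J}(ab,uv)=-\mathcal{J}(ab,vu)$, yields $\mathcal{J}(ab,x_j y_{\pi(j)})=\mathcal{J}(ab,x_j z_0)-\mathcal{J}(ab,y_{\pi(j)} z_0)$. Summing over $j$ and using that $\pi$ is a bijection of $\{1,\dots,m\}$, the $z_0$-terms telescope:
\[
c_\pi=\sum_{j=1}^m\mathcal{J}(ab,x_j z_0)-\sum_{j=1}^m\mathcal{J}(ab,y_{\pi(j)} z_0)
=\sum_{j=1}^m\mathcal{J}(ab,x_j z_0)-\sum_{j=1}^m\mathcal{J}(ab,y_j z_0)=:K,
\]
which is manifestly independent of $\pi$ (and, by the same telescoping, of $z_0$); equivalently $K=c_{\mathrm{id}}=\sum_{j=1}^m\mathcal{J}(ab,x_j y_j)$. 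Since each linking number lies in $\{0,\pm\tfrac12,\pm1\}$, $K$ is a constant in $\tfrac12\mathbb{Z}\subseteq\mathbb{K}$.

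Substituting $c_\pi=K$ back into the first display, the permutation sum reassembles into the determinant:
\[
\{ab,\Delta\}_{0,1}=ab\cdot K\cdot\sum_{\pi\in S_m}\operatorname{sgn}(\pi)\prod_{i=1}^m x_i y_{\pi(i)}
=K\cdot ab\cdot\Delta\big((x_1,\dots,x_m),(y_1,\dots,y_m)\big),
\]
as claimed. The only point that is not purely formal is the telescoping step, so the main thing to check carefully is that the cocycle identity and the antisymmetry remain valid when some $x_i$ or $y_i$ coincides with $a$ or $b$ — which is automatic, since the linking number (hence these two identities) is defined for arbitrary quadruples and not just those in general position. Note also that the geometric hypotheses inherited from Lemma~\ref{swcal} (anticlockwise ordering of the $x_i$ and $y_i$, the position relative to $\overrightarrow{ab}$) are not needed for this computation.
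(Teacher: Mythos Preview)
Your proof is correct and follows the same overall strategy as the paper: expand the determinant via the Leibniz formula, apply the derivation property together with $\{ab,sy\}_{0,1}=\mathcal{J}(ab,sy)\cdot ab\cdot sy$ to reduce to the scalars $c_\pi=\sum_j\mathcal{J}(ab,x_jy_{\pi(j)})$, and then show these are independent of $\pi$. The only difference lies in this last step. The paper checks $K((kl)\sigma)=K(\sigma)$ for every transposition $(kl)$ by applying the cocycle identity to the four affected terms and then invokes that $S_m$ is generated by transpositions; you instead fix an auxiliary point $z_0$, use the cocycle identity plus antisymmetry to write $\mathcal{J}(ab,x_jy_{\pi(j)})=\mathcal{J}(ab,x_jz_0)-\mathcal{J}(ab,y_{\pi(j)}z_0)$, and let bijectivity of $\pi$ do the rest. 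Your telescoping argument is a touch more direct and immediately exhibits the explicit value $K=\sum_j\mathcal{J}(ab,x_jy_j)$, while the paper's transposition check is more local and avoids the auxiliary point. Both rest on the same cocycle identity, so the distinction is one of packaging rather than substance. Your closing remark that the ordering and side hypotheses inherited from Lemma~\ref{swcal} play no role here is also correct.
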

\begin{proof}
Since 
\begin{equation*}
\Delta((x_1, \cdots, x_{m}), (y_1,\cdots, y_{m})) = \sum_{\sigma \in S_{m}} \epsilon_\sigma \prod_i x_i y_{\sigma(i)}
\end{equation*}
 where $\epsilon_\sigma$ is the sign of $\sigma$ in the permutation group $S_{m}$, 
\begin{equation*}
\begin{aligned}
&\{ab, \Delta((x_1, \cdots, x_{m}), (y_1,\cdots, y_{m}))\}_{0,1} 
\\&= ab \cdot \sum_{\sigma \in S_{m}} \epsilon_\sigma \big(\sum_{i=1}^{m}\mathcal{J}(ab, x_i y_{\sigma(i)})\big) \prod_i x_i y_{\sigma(i)}
\\&= ab \cdot \sum_{\sigma \in S_{m}} \epsilon_\sigma K(\sigma) \prod_i x_i y_{\sigma(i)},
\end{aligned}
\end{equation*} 
where $K(\sigma)=\sum_{i=1}^{m}\mathcal{J}(ab, x_i y_{\sigma(i)})$. For any transposition $(kl)$ in the permutation group $S_{m}$, we get 
\begin{equation*}
\begin{aligned}
&K((kl)\cdot \sigma) - K(\sigma) 
\\&= \mathcal{J}(ab, x_k y_{\sigma(l)})+ \mathcal{J}(ab, x_l y_{\sigma(k)}) - \mathcal{J}(ab, x_k y_{\sigma(k)}) - \mathcal{J}(ab, x_l y_{\sigma(l)})
\\&= \mathcal{J}(ab, y_{\sigma(k)} y_{\sigma(l)})+  \mathcal{J}(ab, y_{\sigma(l)} y_{\sigma(k)})
\\&= 0.
\end{aligned}
\end{equation*} 
For any $\sigma', \sigma \in S_{m}$, $\sigma'$ is related to $\sigma$ by a sequence of transpositions.
Thus $K(\sigma)=K(\sigma')$. Then we denote $K(\sigma)$ by $K$.
Finally, we conclude that 
\begin{equation*}
\{ab, \Delta((x_1, \cdots, x_{m}), (y_1,\cdots, y_{m}))\}_{0,1}
= K\cdot  ab \cdot \Delta((x_1, \cdots, x_{m}), (y_1,\cdots, y_{m})).
\end{equation*}

\end{proof}
The following proposition is a consequence of Lemma \ref{swcal} and Lemma \ref{swc2}.
\begin{prop}
$R_n(\mathcal{P})$ is a Poisson ideal of $\mathcal{Z}(\mathcal{P})$ with respect to the $(\alpha,\beta)$-swapping bracket.
\end{prop}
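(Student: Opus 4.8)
The plan is to show that for any generator $D = \Delta\big((x_1,\dots,x_{n+1}),(y_1,\dots,y_{n+1})\big)$ of $R_n(\mathcal{P})$ and any generator $ab$ of $\mathcal{Z}(\mathcal{P})$, the bracket $\{ab, D\}_{\alpha,\beta}$ again lies in $R_n(\mathcal{P})$; Leibniz's rule then propagates this to arbitrary products, so $\{R_n(\mathcal{P}), \mathcal{Z}(\mathcal{P})\}_{\alpha,\beta} \subseteq R_n(\mathcal{P})$, which is exactly the assertion that $R_n(\mathcal{P})$ is a Poisson ideal. By the decomposition $\{\cdot,\cdot\}_{\alpha,\beta} = \alpha\{\cdot,\cdot\}_{1,0} + \beta\{\cdot,\cdot\}_{0,1}$ from \eqref{equation:add}, it suffices to treat the two brackets $\{\cdot,\cdot\}=\{\cdot,\cdot\}_{1,0}$ and $\{\cdot,\cdot\}_{0,1}$ separately.

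For the $(0,1)$-bracket the work is already done: Lemma \ref{swc2} (applied with $m = n+1$) gives $\{ab, D\}_{0,1} = K \cdot ab \cdot D$ for a constant $K$, which is visibly a multiple of $D$ and hence in $R_n(\mathcal{P})$. For the $(1,0)$-bracket I would invoke Lemma \ref{swcal} with $m = n+1$, which expresses $\{ab, D\}$ as $\Delta^R(ab)$ — a $\mathcal{Z}(\mathcal{P})$-linear combination of determinants of the shape $\Delta\big((x_1,\dots,x_{d-1},a,x_{d+1},\dots,x_{n+1}),(y_1,\dots,y_{n+1})\big)$ and $\Delta\big((x_1,\dots,x_{n+1}),(y_1,\dots,y_{d-1},b,y_{d+1},\dots,y_{n+1})\big)$. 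Every such determinant is itself an $(n+1)\times(n+1)$ determinant of the form $\Delta\big((\text{$n{+}1$-tuple}),(\text{$n{+}1$-tuple})\big)$, so by definition it is one of the generators of $R_n(\mathcal{P})$; therefore each summand lies in $R_n(\mathcal{P})$ and so does the whole sum. Combining, $\{ab,D\}_{\alpha,\beta} = \alpha\,\Delta^R(ab) + \beta\,K\,ab\,D \in R_n(\mathcal{P})$.

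One technical point to address is that Lemma \ref{swcal} is stated under the hypotheses that the $x_i$ (resp.\ $y_i$) are mutually distinct and anticlockwise ordered, with a prescribed partition of them relative to the edge $\overrightarrow{ab}$. To handle an arbitrary generator $D$ I would note that permuting the entries of a tuple only changes $D$ by a sign (and a repeated entry makes $D=0$), and that relabelling the roles of $a,b$ and reordering are harmless; alternatively one observes that the conclusion $\{ab,D\}\in R_n(\mathcal{P})$ is preserved under these symmetries, so it is enough to verify it in the normalized configuration covered by the lemma. Since $\mathcal{Z}(\mathcal{P})$ is generated as a ring by the $ab$'s, extending from the generators $ab$ to all of $\mathcal{Z}(\mathcal{P})$ by Leibniz is immediate, and we conclude $R_n(\mathcal{P})$ is a Poisson ideal. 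The only place demanding genuine care is the bookkeeping in the previous paragraph — making sure the determinants appearing in $\Delta^R(ab)$ really are $(n+1)\times(n+1)$ and hence generators of $R_n(\mathcal{P})$ — but this is immediate from the shape of \eqref{equation:R}, so there is no serious obstacle.
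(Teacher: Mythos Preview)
Your proposal is correct and follows exactly the approach the paper intends: the paper simply records that the proposition ``is a consequence of Lemma~\ref{swcal} and Lemma~\ref{swc2}'', and you have spelled out precisely that argument --- splitting via \eqref{equation:add}, using Lemma~\ref{swc2} for the $(0,1)$-part and Lemma~\ref{swcal} for the $(1,0)$-part, and observing that the resulting $(n{+}1)\times(n{+}1)$ determinants lie in $R_n(\mathcal{P})$. Your remarks on reducing to the normalized configuration and extending by Leibniz's rule are the appropriate routine details.
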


\begin{defn}[$(\alpha,\beta)$-rank $n$ swapping algebra of $\mathcal{P}$]
The {\em $(\alpha,\beta)$-rank $n$ swapping algebra of $\mathcal{P}$} is the ring $\mathcal{Z}_n(\mathcal{P})=\mathcal{Z}(\mathcal{P})/R_n(\mathcal{P})$ equipped with the $(\alpha,\beta)$-swapping bracket, denoted by $(\mathcal{Z}_n(\mathcal{P}),\{\cdot, \cdot\}_{\alpha,\beta})$.

We define $(\mathcal{Z}_n(\mathcal{P}),\{\cdot, \cdot\})$ to be the {\em rank $n$ swapping algebra of $\mathcal{P}$}.  
\end{defn}

By Theorem 4.7 in \cite{Su17}, $\mathcal{Z}_n(\mathcal{P})$ is an integral domain. Thus the fraction field of $\mathcal{Z}_n(\mathcal{P})$ is well-defined, denoted by $\mathcal{Q}_n(\mathcal{P})$. The cross fractions are well-defined in $\mathcal{Q}_n(\mathcal{P})$.
Let $\mathcal{B}_n(\mathcal{P})$ be the subring of $\mathcal{Q}_n(\mathcal{P})$ generated by all the cross fractions.

\begin{defn}
\label{defnsma}
The {\em $(\alpha,\beta)$-rank $n$ swapping fraction (multifraction resp.) algebra of $\mathcal{P}$} is $\mathcal{Q}_n(\mathcal{P})$ ($\mathcal{B}_n(\mathcal{P})$ resp.) equipped with the $(\alpha,\beta)$-swapping bracket, denoted by $(\mathcal{Q}_n(\mathcal{P}),\{\cdot, \cdot\}_{\alpha,\beta})$ ($(\mathcal{B}_n(\mathcal{P}),\{\cdot, \cdot\}_{\alpha,\beta})$ resp.).

\end{defn}

\subsection{$(n\times n)$-determinant ratio}
%\label{subsection:nnratio}
Let us recall the $(n\times n)$-determinant ratio in \cite[Section 4]{Su15}.

\begin{defn}[$(n\times n)$-determinant ratio]
\label{defndr}
Let $x_1,\cdots,x_{n-1},y \in \mathcal{P}$ be mutually distinct.
The {\em $(n\times n)$-determinant ratio} of $x_1,\cdots,x_{n-1},t,y$:
\begin{equation*}
E(x_1,\cdots,x_{n-1}| t,y) := \frac{\Delta\left(\left(x_1,\cdots, x_{n-1} , t\right), \left(v_1,\cdots, v_n\right)\right)}{\Delta\left(\left(x_1, \cdots,x_{n-1} , y\right), \left(v_1,\cdots, v_n\right)\right)}
\end{equation*}
for any mutually distinct $v_1,\cdots,v_n \in \mathcal{P}$. By~\cite[Corollary 4.5]{Su15}, the $(n\times n)$-determinant ratio does not depend on the mutually distinct $v_1,\cdots,v_n \in \mathcal{P}$ that we choose.

Let $\mathcal{DR}_n(\mathcal{P})$ be the subfield of $\mathcal{Q}_n(\mathcal{P})$ generated by all the $(n \times n)$-determinant ratios. 
\end{defn}

\begin{remark}
\label{remark:rnt}
As a consequence, the swapping bracket between any element in $\mathcal{Q}_n(\mathcal{P})$ and $E(x_1,\cdots,x_{n-1}| t,y)$ does not depend on the choice of right side $n$-tuple $(v_1,\cdots,v_n)$ of $E(x_1,\cdots,x_{n-1}| t,y)$.
We can calculate the swapping bracket between two $(n\times n)$-determinant ratios with the right side $n$-tuples in any preferred positions by enlarging $\mathcal{P}$.
\end{remark}

\section{Rank $n$ swapping algebra on the Grassmannian}
\label{gnm}
\subsection{Poisson structure on the Grassmannian}
\begin{defn}[Grassmannian]
Let $n$ and $r$ be two integers such that $0<n\leq r$. Let $\mathbb{K}$ be a field of characteristic zero (e.g. $\mathbb{R}$ or $\mathbb{C}$). The {\em Grassmannian} $\textbf{Gr}(n,r)$ is the manifold of $n$-dimensional vector subspaces in $\mathbb{K}^r$.
\end{defn}

Let $\operatorname{Mat}^*(n,r)$ be the set of $(n\times r)$-matrices over $\mathbb{K}$ with rank $n$. Then
\begin{equation*}
\textbf{Gr}(n,r)=\operatorname{GL}_n\backslash \operatorname{Mat}^*(n,r)
\end{equation*}
where $\operatorname{GL}_n$ acts on $\operatorname{Mat}^*(n,r)$ by left multiplication.

\begin{defn}[Coordinates on $\textbf{Gr}(n,r)$]
Let $\mathbf{I}$ be a $n$-element subset of $\{1,\cdots,r\}$. The {\em Pl\"ucker coordinate} $\Delta_\mathbf{I}$ of $\operatorname{Mat}^*(n,r)$ is the minor of the matrix formed by the columns of the matrix indexed by $\mathbf{I}$ with respect to lexicographical order. 

The {\em Schubert cell} 
\begin{equation*}
\textbf{Gr}(n,r)^{\mathbf{I}}:=\{A \in \textbf{Gr}(n,r) \;| \; \mathbf{I}\;\;\text{is lexicographically minimal such that } \Delta_\mathbf{I}(A)\neq 0\}.
\end{equation*}
The Grassmannian $\textbf{Gr}(n,r)$ is a disjoint union $\bigcup_{\mathbf{I}} \textbf{Gr}(n,r)^{\mathbf{I}}$.

Observe that for any $g \in \operatorname{GL}_n$,
\begin{equation*}
\Delta_\mathbf{I}(g\cdot A)= \det(g)\cdot \Delta_\mathbf{I}(A).
\end{equation*}
The subset $\mathbf{I}(i \rightarrow j)$ is obtained from $\mathbf{I}$ by replacing $i\in \mathbf{I}$ by $j\notin \mathbf{I}$. Then 
\begin{equation*}
\left\{m_{ij} = \frac{\Delta_{\mathbf{I}(i \rightarrow j)}}{\Delta_\mathbf{I}}\right\}_{i\in \mathbf{I}, j \notin \mathbf{I}}
\end{equation*}
form a {\em coordinate system} on the Schubert cell $\textbf{Gr}(n,r)^{\mathbf{I}}$.
\end{defn}

\begin{figure}
\includegraphics[scale=0.5]{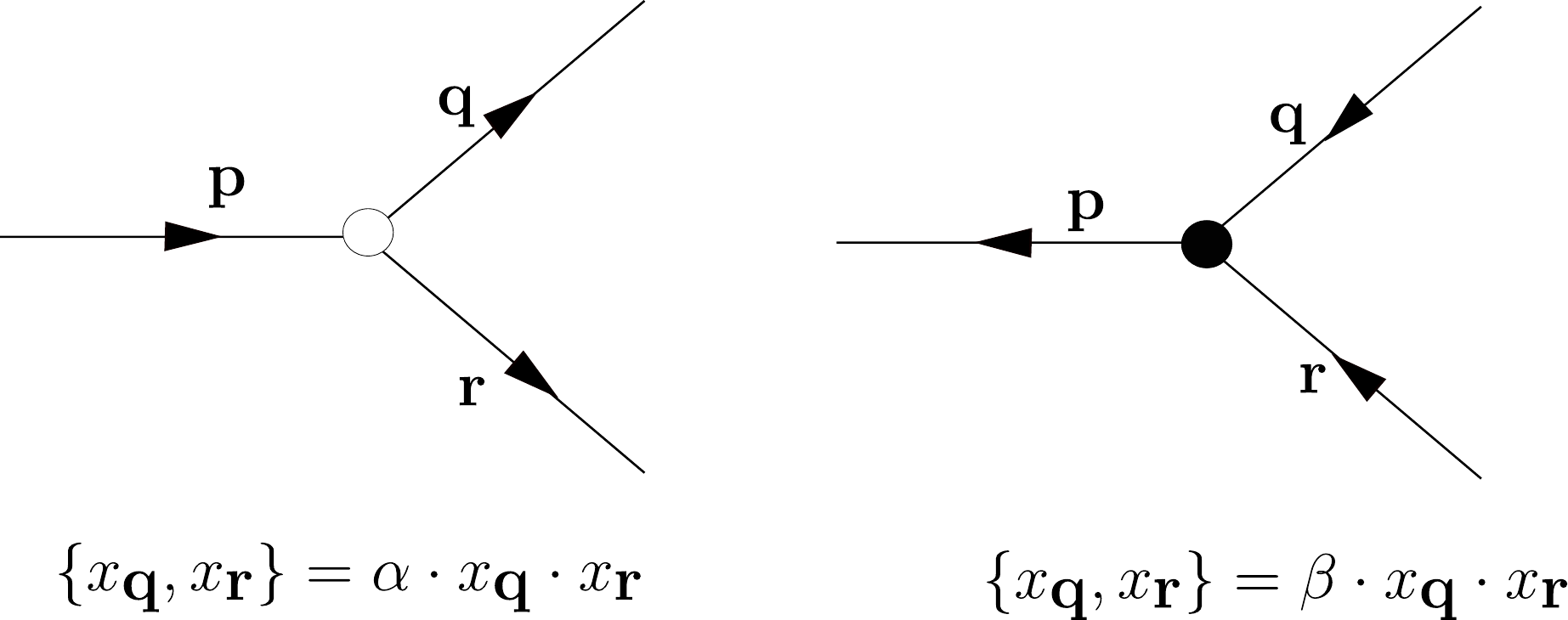}
\caption{}
\label{figure:bw}
\end{figure}

\begin{defn}[Directed planar graph for $\textbf{Gr}(n,r)$]
A {\em directed planar graph for $\textbf{Gr}(n,r)$} is a directed planar graph $(V,E)$ on a disk with $r$ vertices $v_1,\cdots,v_r$ ordered anticlockwise on the boundary, where there are $n$ ($r-n$ resp.) vertices on the boundary such that each one of these vertices has exactly one outgoing (incoming resp.) edge, each inner vertex has one incoming edge and two outgoing edges (called white vertex) or two incoming edges and one outgoing edge (called black vertex) as in Figure~\ref{figure:bw}. 
\end{defn}
To a vertex $v$ and the edge $e$ connecting to $v$, let $\textbf{p}$ be $(v,e)$, we assign a variable $x_\textbf{p} \in \mathbb{R}\backslash \{0\}$. Let $d$ be the cardinality of these pairs. We define a Poisson structure on $(\mathbb{R}\backslash \{0\})^d$ as follows. For a white (black resp.) vertex, labelling from incoming (outgoing resp.) edge clockwise $x_\textbf{p}, x_\textbf{q}, x_\textbf{r}$, as in Figure~\ref{figure:bw}, the Poisson bracket is $\{x_\textbf{q}, x_\textbf{r}\}_N = \alpha \cdot x_\textbf{q} \cdot x_\textbf{r}$ ($\{x_\textbf{q}, x_\textbf{r}\}_N = \beta \cdot x_\textbf{q} \cdot x_\textbf{r}$ resp.), otherwise zero.

\begin{defn}[Perfect planar network]
Given a directed planar graph $(V,E)$ for $\textbf{Gr}(n,r)$, let $w$ be a map
\begin{equation*}
w : (\mathbb{R}\backslash \{0\})^d \rightarrow\mathbb{R}^{|E|}
\end{equation*} 
such that the entry for the edge $e$ is $w_e = x_\textbf{a} x_\textbf{b}$ where $\textbf{a} = (v, e)$ and $\textbf{b} = (u, e)$. We define the {\em perfect planar network} to be $N = (V,E, \{w_e\}_{e \in E})$. Each perfect planar network defines the {\em space of edge weights} $E_N:=w((\mathbb{R}\backslash \{0\})^d) $. 
\end{defn}

\begin{defn}[Boundary measurement map]
Let $N = (V,E, \{w_e\}_{e \in E})$ be the perfect planar network where the $n$-element subset $\mathbf{I}$ corresponds to the vertices on the boundary which have exactly one outgoing edge.
The boundary measurement map $b$ is a rational map from $E_N$ to the cell $\textbf{Gr}(n,r)^\mathbf{I}$ given as follows. Choose a unique representative in $\operatorname{Mat}^*(n,r)$ so that the sub $n\times n$ matrix formed by the columns from $\mathbf{I}$ is the identity matrix $I_{n\times n}$, the other $(i,j)$-entry of the $(n\times r)$-matrix is defined to be the sum of the products of the weights of all paths starting at $v_i$ and ending at $v_j$. 
\end{defn}
The map $b \circ w$ induces the $\{\cdot, \cdot\}_{B_{\alpha,\beta}}$ Poisson bracket on $\textbf{Gr}(n,r)^\mathbf{I}$ from the Poisson bracket $\{\cdot, \cdot\}_N$ on $(\mathbb{R}\backslash \{0\})^d$.

We identify the set $\{1,2,\cdots,r\}$ used to enumerate the columns with the anticlockwise ordered set $\mathcal{P}=\{a_1,a_2,\cdots,a_r\}$ on a circle.
\begin{defn}
\cite[Theorem 3.3, $\{\cdot, \cdot\}_{B_{\alpha,\beta}}$]{GSV09}
Let $\mathcal{P} = \{a_1, a_2, \cdots a_r\}$ be ordered anticlockwise on a circle. The symbol $\prec$ is used to denote the anticlockwise ordering on a circle. Let {\em parallel number} be
\begin{equation*}
s_{||}(a_i a_j,a_{i'} a_{j'})=\left\{
\begin{aligned}
&1 \;\;\; if \;\;\; a_i\prec a_{i'}\prec a_{j'} \prec a_j \prec a_i;  \\
&-1 \;\;\; if \;\;\; a_{i'}\prec a_i\prec a_j \prec a_{j'} \prec a_{i'};   \\
&\frac{1}{2} \;\;\; if \;\;\; a_i= a_{i'}\prec a_{j'} \prec a_j \prec a_i \; or \; a_i\prec a_{i'}\prec a_{j'} = a_j \prec a_i;   \\
&-\frac{1}{2} \;\;\; if \;\;\; a_{i'}= a_i\prec a_j \prec a_{j'} \prec a_{i'} \; or \; a_{i'}\prec a_i\prec a_j = a_{j'}\prec  a_{i'}; \\
&0 \;\;\; otherwise.
\end{aligned}
\right.
\end{equation*}
As in Figure \ref{figure:parallel}, the parallel number depends only on the corresponding position of the four points.
For any $n$-element subset $\mathbf{I}$, the $\{\cdot, \cdot\}_{B_{\alpha,\beta}}$ Poisson bracket of $\textbf{Gr}(n,r)^{\mathbf{I}}$ is given by
\begin{equation}
\label{equation:pgnr}
\{m_{ij}, m_{i'j'}\}_{B_{\alpha,\beta}} = (\alpha-\beta)\cdot s_{||}(a_i a_j, a_{i'} a_{j'}) \cdot m_{ij'} \cdot m_{i'j}+ (\alpha+\beta)\cdot \mathcal{J}(a_i a_j, a_{i'} a_{j'}) \cdot m_{ij} \cdot m_{i'j'}.
\end{equation}
\end{defn}

\begin{remark}
When $\textbf{I}=\{1,2,\cdots,n\}$, the above Poisson bracket can be also found in \cite{BGY06}. The parameters $(\alpha,\beta)$ are used to describe the $R$-matrix in \cite[Section 4]{GSV09}. The definition of $s_{\times}$ from \cite{GSV09} coincides with the linking number $\mathcal{J}$.
\end{remark}

\begin{figure}
\includegraphics[scale=0.5]{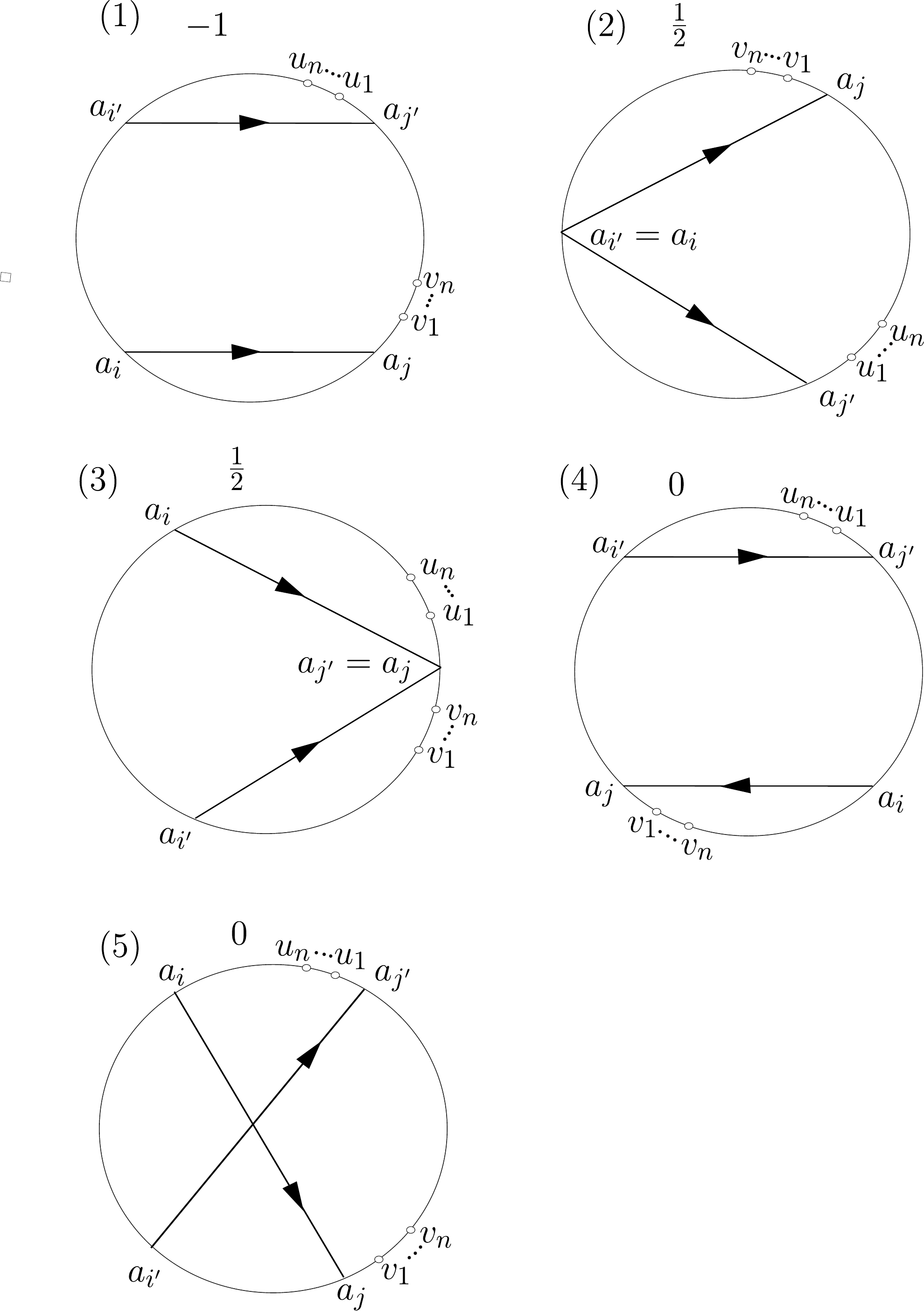}
\caption{Parallel number $s_{||}(a_i a_j, a_{i'} a_{j'})$, arrangement of $(v_1,\cdots,v_n)$ and $(u_1,\cdots,u_n)$}
\label{figure:parallel}
\end{figure}

\subsection{Main theorem}
\begin{thm}[Main theorem]
\label{theorem:Gab}
Let $\mathcal{P} = \{a_1\prec \cdots \prec a_r \prec a_1\}$ be ordered anticlockwise on a circle. The $(\beta-\alpha, \alpha+\beta)$-rank $n$ swapping multifraction algebra is denoted by $(\mathcal{Q}_n(\mathcal{P}),\{\cdot, \cdot\}_{\beta-\alpha, \alpha+\beta})$. Let $\mathbf{I}$ be a $n$-element subset $\mathbf{I}= \{k_1, \cdots, k_n\}$ of $\{1,\cdots, r\}$ with $k_1<\cdots<k_n$. If $i \in \mathbf{I}$, let $\widehat{a_i}$ be $a_{k_1}, \cdots, a_{k_n}$ with $a_i$ removed.
 There is an injective algebra homomorphism 
\begin{equation*} 
\theta_{\alpha,\beta}: \mathbb{K}[\{m_{ij}\}_{i\in \mathbf{I}, j\notin \mathbf{I}}] \rightarrow \mathcal{Q}_n(\mathcal{P})
\end{equation*}
defined by extending the following formula on arbitrary generator $m_{ij}$ to the polynomial ring
\begin{equation*}
\theta_{\alpha,\beta}\left(m_{ij}\right)= E\left(\widehat{a_i}|a_j,a_i\right).
\end{equation*}
Then the algebra homomorphism $\theta_{\alpha,\beta}$ is Poisson with respect to $\{\cdot,\cdot\}_{B_{\alpha,\beta}}$ and $\{\cdot, \cdot\}_{\beta-\alpha,\alpha+\beta}$.
\end{thm}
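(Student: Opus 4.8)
The plan is to reduce the theorem to a computation entirely inside $\mathcal{Q}_n(\mathcal{P})$, exploiting the decomposition \eqref{equation:add} of the swapping bracket into its $(1,0)$ and $(0,1)$ parts. First I would record that both sides of \eqref{equation:pgnr} are bilinear in the generators via Leibniz's rule, and that the target $(\beta-\alpha,\alpha+\beta)$-bracket likewise splits as $(\beta-\alpha)\{\cdot,\cdot\}_{1,0}+(\alpha+\beta)\{\cdot,\cdot\}_{0,1}$; matching this against the two terms of \eqref{equation:pgnr} (the $(\alpha-\beta)s_{||}$ term and the $(\alpha+\beta)\mathcal{J}$ term), it suffices to prove the two identities
\begin{equation*}
\left\{E(\widehat{a_i}|a_j,a_i),\, E(\widehat{a_{i'}}|a_{j'},a_{i'})\right\}_{1,0} = -\,s_{||}(a_ia_j,a_{i'}a_{j'})\cdot E(\widehat{a_i}|a_{j'},a_i)\cdot E(\widehat{a_{i'}}|a_j,a_{i'})
\end{equation*}
and
\begin{equation*}
\left\{E(\widehat{a_i}|a_j,a_i),\, E(\widehat{a_{i'}}|a_{j'},a_{i'})\right\}_{0,1} = \mathcal{J}(a_ia_j,a_{i'}a_{j'})\cdot E(\widehat{a_i}|a_j,a_i)\cdot E(\widehat{a_{i'}}|a_{j'},a_{i'})
\end{equation*}
(with the sign on the first reflecting the $m_{ij'}m_{i'j}$ versus $E(\cdots|a_{j'},a_i)E(\cdots|a_j,a_{i'})$ bookkeeping, which I would fix carefully once the determinant side is written out). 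That the homomorphism $\theta_{\alpha,\beta}$ is well-defined and injective follows because the $m_{ij}$ are algebraically independent coordinates on $\textbf{Gr}(n,r)^{\mathbf I}$ and the determinant ratios $E(\widehat{a_i}|a_j,a_i)$ satisfy no extra relations in $\mathcal{Q}_n(\mathcal{P})$ beyond the Plücker ones — this I would justify by the geometric model of $\mathcal{Z}_n(\mathcal{P})$ recalled in the introduction, or by exhibiting an explicit point where the $E$'s take prescribed values.

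The second identity is the easy one: by Lemma~\ref{swc2}, $\{ab,\Delta\}_{0,1}=K\cdot ab\cdot\Delta$ for a constant $K$ depending only on the linking numbers $\mathcal{J}(ab,x_iy_{\sigma(i)})$, hence the $\{\cdot,\cdot\}_{0,1}$-bracket is a derivation that acts on each determinant by a scalar; applying it to a ratio of two determinants with the \emph{same} right-side $n$-tuple, the scalars subtract, and what survives is exactly $\mathcal{J}(a_ia_j,a_{i'}a_{j'})$ times the product, using the cocycle identity for $\mathcal{J}$ exactly as in the proof of Lemma~\ref{lem:swap01}. I would first prove the one-sided statement $\{ab,E(x_1,\dots,x_{n-1}|t,y)\}_{0,1}=\mathcal{J}(ab,ty)\cdot ab\cdot E(x_1,\dots,x_{n-1}|t,y)$ and then specialize $a=a_{i'}$, $b=a_{j'}$ (or the reverse), $t=a_j$, $y=a_i$, noting $\mathcal{J}(a_{i'}a_{j'},a_ja_i)=\mathcal{J}(a_ia_j,a_{i'}a_{j'})$ by antisymmetry in both arguments.

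The first identity, for $\{\cdot,\cdot\}_{1,0}$, is the real work. Here the strategy is to use Lemma~\ref{swcal}: to compute $\{a_{i'}a_{j'},\Delta\}$ where $\Delta=\Delta((\widehat{a_i},a_j),(v_1,\dots,v_n))$, I would invoke Remark~\ref{remark:rnt} and enlarge $\mathcal{P}$ so that the right-side $n$-tuple $(v_1,\dots,v_n)$ and the auxiliary points $u,v$ sit in convenient positions relative to the oriented chord $\overrightarrow{a_{i'}a_{j'}}$; then $\Delta^R$ or $\Delta^L$ expands the bracket as a sum of determinants each obtained by a single column substitution $x_d\mapsto a_{i'}$ (left side) or $y_d\mapsto a_{j'}$ (right side), weighted by linking numbers and a factor $x_db$ or $ay_d$. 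Because $\theta_{\alpha,\beta}(m_{ij})$ is a ratio of determinants sharing the right-side tuple, the right-column substitutions cancel between numerator and denominator after applying Leibniz's rule to the quotient, leaving only the left-column substitutions; among those, the substituted determinant vanishes unless the inserted point $a_{i'}$ replaces a column that makes it nonsingular, and generically exactly the substitutions $a_i\mapsto a_{i'}$ (for the numerator) survive, producing $E(\widehat{a_{i'}}|a_j,a_{i'})$-type factors. Reassembling, the linking-number weights collapse — via the relative position of $a_i,a_j,a_{i'},a_{j'}$ on the circle and the definition of $s_{||}$ — into precisely $-s_{||}(a_ia_j,a_{i'}a_{j'})\cdot E(\widehat{a_i}|a_{j'},a_i)\cdot E(\widehat{a_{i'}}|a_j,a_{i'})$. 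The main obstacle I anticipate is the case analysis in this last collapse: the positions of $a_i,a_{i'},a_j,a_{j'}$ (together with whether any coincide and where the elements of $\widehat{a_i}\cap\widehat{a_{i'}}$ lie) determine which terms in the Lemma~\ref{swcal} sum are nonzero and with what linking-number coefficient, and one must check that in every configuration the bookkeeping reproduces the five-case definition of $s_{||}$ — in particular the $\pm\tfrac12$ boundary cases where $a_i=a_{i'}$ or $a_j=a_{j'}$, which require matching the half-integer values of $\mathcal{J}$ against those of $s_{||}$. I would organize this by choosing, without loss of generality via Remark~\ref{remark:rnt}, the auxiliary chord endpoints so that one of $\Delta^R,\Delta^L$ has as few terms as possible, and then treating the generic $s_{||}=0$ case, the $s_{||}=\pm1$ crossing case, and the coincidence cases separately.
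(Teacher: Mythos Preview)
Your overall plan—split $\{\cdot,\cdot\}_{\beta-\alpha,\alpha+\beta}$ into its $(1,0)$ and $(0,1)$ parts via \eqref{equation:add} and verify the two displayed identities separately—matches the paper's, and your treatment of the $(0,1)$ identity via Lemma~\ref{swc2} plus the cocycle identity is essentially the paper's Lemma~\ref{lemma:01} (though your ``then specialize $a=a_{i'}$, $b=a_{j'}$'' is not quite right: $E(\widehat{a_{i'}}|a_{j'},a_{i'})$ is not the generator $a_{i'}a_{j'}$, so you must apply the one-sided reduction once more in the first argument and then collapse the resulting sum of linking numbers by cocycle, as the paper does).

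The genuine gap is in the $(1,0)$ computation. You propose computing $\{a_{i'}a_{j'},\Delta\}$ via Lemma~\ref{swcal}, but what you must evaluate is a bracket of two determinant \emph{ratios}, not of a single generator against a determinant. The tool you are missing is Lemma~\ref{lemcalMB} (\cite[Lemma~5.6]{Su15}), which expands $\{\det M,B\}=\sum_{s,t}(-1)^{s+t}M_{st}\{c_sv_t,B\}$ and thereby reduces each of the four brackets $[\Delta((\widehat{a_i},\bullet),(v^n)),\Delta((\widehat{a_{i'}},\star),(u^n))]$ for $\bullet\in\{a_j,a_i\}$, $\star\in\{a_{j'},a_{i'}\}$ to sums of $\{c_sv_t,\Delta'\}$ that Lemma~\ref{swcal} handles. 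Furthermore, your claim that ``the right-column substitutions cancel between numerator and denominator'' is false for the $(1,0)$-bracket: unlike the $(0,1)$ case, $\{ab,\Delta\}_{1,0}/\Delta$ depends on the left-side tuple of $\Delta$, so the right-column contributions from numerator and denominator are genuinely different and do not cancel term by term. The paper avoids this not by cancellation but by placing $(v^n)$ adjacent to $a_j$ and $(u^n)$ adjacent to $a_{j'}$ (Remark~\ref{remark:rnt}, Figure~\ref{figure:parallel}); with this placement one does a case split on $c_s\in\{a_{i'}\}\cup\{a_j\}\cup(\widehat{a_i}\setminus\{a_{i'}\})$, finds that only $c_s=a_{i'}$ contributes the expected $E(\widehat{a_i}|a_{j'},a_i)\,E(\widehat{a_{i'}}|a_j,a_{i'})$ factor, while the remaining $c_s$ contribute half-integers of the form $\tfrac12(n_1-n_2+c)$ counting how many elements of $\widehat{a_i}\cap\widehat{a_{i'}}$ lie on each side of the relevant chord, and these constants cancel in the alternating sum coming from Leibniz's rule on the ratio. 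This $(n_1,n_2)$ counting-and-cancellation is the actual engine of the $(1,0)$ computation, and your sketch does not contain it.
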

We prove the main theorem by several steps. Firstly, we send the ratios of $(n\times n)$-determinants in $\mathbb{K}^n$ to the $(n\times n)$-determinant ratios for $\mathcal{Z}_n(\mathcal{P})$. The injectivity follows the same argument in \cite[Proposition 5.3]{Su15} using \cite[Theorem 4.6]{Su17}.
\begin{notation}
Let us denote $\frac{\{A,B\}_{\alpha,\beta}}{A\cdot B}$ by $[A,B]_{\alpha,\beta}$. 

Moreover, we denote $[A,B]_{1,0}$ by $[A,B]$.

Let $u^k:=u_1,\cdots,u_k$.
\end{notation}

\begin{lem}
\label{lemma:01}
\begin{equation*}
\begin{aligned}
&\left\{E\left(\widehat{a_i}|a_j,a_i\right), E\left(\widehat{a_{i'}}|a_{j'},a_{i'}\right)\right\}_{0,1}
=  \mathcal{J}(a_i a_j, a_{i'} a_{j'})  \cdot  E\left(\widehat{a_i}|a_j,a_i\right) \cdot E\left(\widehat{a_{i'}}|a_{j'},a_{i'}\right),
\end{aligned}
\end{equation*}
\end{lem}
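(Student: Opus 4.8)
The plan is to reduce the claim to the computation of $\{\cdot,\cdot\}_{0,1}$-brackets of individual $(n\times n)$-determinants and then to exploit the multiplicativity of $[\cdot,\cdot]_{0,1}$ coming from Leibniz's rule. First I would write $E(\widehat{a_i}|a_j,a_i)$ and $E(\widehat{a_{i'}}|a_{j'},a_{i'})$ as ratios of honest determinants $\Delta((\widehat{a_i},a_j),(v^n))/\Delta((\widehat{a_i},a_i),(v^n))$ and $\Delta((\widehat{a_{i'}},a_{j'}),(w^n))/\Delta((\widehat{a_{i'}},a_{i'}),(w^n))$; by Remark~\ref{remark:rnt} I am free to move the right-side $n$-tuples $v^n, w^n$ into any convenient positions by enlarging $\mathcal{P}$, so I would choose them to avoid all the points $a_i, a_j, a_{i'}, a_{j'}$ and each other. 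Then by Leibniz's rule
\begin{equation*}
\left[E(\widehat{a_i}|a_j,a_i), E(\widehat{a_{i'}}|a_{j'},a_{i'})\right]_{0,1}
= \sum_{\pm} \pm\bigl[D_\bullet, D_\circ\bigr]_{0,1},
\end{equation*}
a signed sum of the four quantities $[D_\bullet, D_\circ]_{0,1}$ where $D_\bullet$ ranges over the numerator/denominator determinants of the first ratio and $D_\circ$ over those of the second.

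The key input is Lemma~\ref{swc2}: for a single generator $ab$, $\{ab, \Delta\}_{0,1} = K \cdot ab \cdot \Delta$ with $K = \sum_{i} \mathcal{J}(ab, x_i y_{\sigma(i)})$ for any permutation $\sigma$ (the proof there shows this is independent of $\sigma$). Applying this again with Leibniz's rule in the first slot, I would establish the two-sided version: for any two determinants $D = \Delta((x^m),(y^m))$ and $D' = \Delta((x'^{m'}),(y'^{m'}))$,
\begin{equation*}
[D, D']_{0,1} = \sum_{p=1}^{m}\sum_{q=1}^{m'} \mathcal{J}(x_p y_{\sigma(p)}, x'_q y'_{\tau(q)})
\end{equation*}
for any permutations $\sigma, \tau$ — again independent of the choices, by the same transposition-invariance argument applied now in both slots (using that $\mathcal{J}$ is bilinear-additive in the appropriate sense and antisymmetric). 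In our situation all four determinants share the same left-side $n$-tuple structure up to the single slot $a_i$ vs.\ $a_j$ (resp.\ $a_{i'}$ vs.\ $a_{j'}$), and the right-side tuples $v^n, w^n$ are common within each ratio. Choosing $\sigma, \tau$ so that they pair the distinguished columns $a_i$ (or $a_j$) with a fixed $v_\ell$ and $a_{i'}$ (or $a_{j'}$) with a fixed $w_{\ell'}$, and pair the shared columns identically across numerator and denominator, makes the four constants differ only in the two terms $\mathcal{J}(a_i v_\ell, \cdot)$ vs.\ $\mathcal{J}(a_j v_\ell, \cdot)$ and $\mathcal{J}(\cdot, a_{i'} w_{\ell'})$ vs.\ $\mathcal{J}(\cdot, a_{j'} w_{\ell'})$.

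Carrying out the signed sum, the bulk of the terms (those involving only the shared columns, or a shared column against a distinguished one on the other side) cancel in pairs because of the $+,-,-,+$ sign pattern, and what survives is a combination of $\mathcal{J}(a_i v_\ell, a_{i'} w_{\ell'})$-type terms. Here I would use the cocycle identity $\mathcal{J}(ab,cd)+\mathcal{J}(ab,de)+\mathcal{J}(ab,ec)=0$ (as in the proof of Lemma~\ref{lem:swap01}) to replace, say, $\mathcal{J}(a_i v_\ell, a_{i'} w_{\ell'}) - \mathcal{J}(a_i v_\ell, a_{j'} w_{\ell'})$ by $\mathcal{J}(a_i v_\ell, a_{i'} a_{j'})$ up to sign, then do the same in the first slot to collapse the remaining difference down to $\mathcal{J}(a_i a_j, a_{i'} a_{j'})$; the reason the auxiliary points $v_\ell, w_{\ell'}$ disappear is precisely that each appears in a difference of two $\mathcal{J}$'s with opposite sign. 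The main obstacle I anticipate is bookkeeping: organizing the four-fold signed sum of two-index double sums so that the cancellations are transparent, and being careful that the independence-of-$\sigma,\tau$ statement lets me choose all four pairings coherently at once (which it does, since within each ratio the two determinants differ in a single column, so a common $\sigma$ can be used for numerator and denominator). Once that is set up cleanly, the telescoping via the cocycle identity is mechanical and yields exactly $\mathcal{J}(a_i a_j, a_{i'} a_{j'})$ as the coefficient, proving the lemma.
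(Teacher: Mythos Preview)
Your proposal is correct and follows essentially the same route as the paper: Leibniz expansion into four determinant brackets, Lemma~\ref{swc2} to reduce each $[D,D']_{0,1}$ to a constant sum of linking numbers, cancellation of all terms involving the shared columns $\widehat{a_i},\widehat{a_{i'}}$, and a final collapse of the surviving four linking numbers to $\mathcal{J}(a_i a_j, a_{i'} a_{j'})$ via the cocycle identity. The only cosmetic difference is that the paper takes the \emph{same} right-side $n$-tuple $(u_1,\dots,u_n)$ for both ratios, so the four surviving terms already share a common auxiliary point $u_n$ and the reduction to $\mathcal{J}(a_i a_j, a_{i'} a_{j'})$ is one step shorter than with your separate $v_\ell, w_{\ell'}$.
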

\begin{proof}
Let $(u_1,\cdots,u_n)$ be the right side $n$-tuple of the two $(n\times n)$-determinant ratios.
By Leibniz's rule, we obtain
\begin{equation}
\label{equation:leibEE}
\begin{aligned}
&\left[E\left(\widehat{a_i}|a_j,a_i\right), E\left(\widehat{a_{i'}}|a_{j'},a_{i'}\right)\right]_{0,1}
\\&=\left[\Delta\left((\widehat{a_i}, a_j),(u^n)\right), \Delta\left((\widehat{a_{i'}},a_{j'}),(u^n)\right)\right]-\left[\Delta\left((\widehat{a_i}, a_j),(u^n)\right), \Delta\left((\widehat{a_{i'}},a_{i'}),(u^n)\right)\right]
\\&-\left[\Delta\left((\widehat{a_i}, a_i),(u^n)\right), \Delta\left((\widehat{a_{i'}},a_{j'}),(u^n)\right)\right]+\left[\Delta\left((\widehat{a_i}, a_i),(u^n)\right), \Delta\left((\widehat{a_{i'}},a_{i'}),(u^n)\right)\right]
\end{aligned}
\end{equation}

By Lemma \ref{swc2}, we have
\begin{equation*}
\begin{aligned}
&\left\{E\left(\widehat{a_i}|a_j,a_i\right), E\left(\widehat{a_{i'}}|a_{j'},a_{i'}\right)\right\}_{0,1}
= K  \cdot  E\left(\widehat{a_i}|a_j,a_i\right) \cdot E\left(\widehat{a_{i'}}|a_{j'},a_{i'}\right),
\end{aligned}
\end{equation*}
where $K$ is some constant. By Equation~\eqref{equation:leibEE}, using the same argument as in Lemma \ref{swc2}, $K$ is the summation of $4\cdot n^2$ linking numbers. We observe that all the terms associated to $\widehat{a_{i'}}$ or $\widehat{a_{i}}$ are cancelled except the following four terms:
\begin{equation*}
\begin{aligned}
&K=   \mathcal{J}(a_j u_n, a_{j'} u_n) - \mathcal{J}(a_j u_n, a_{i'} u_n) - \mathcal{J}(a_i u_n, a_{j'} u_n) + \mathcal{J}(a_i u_n, a_{i'} u_n)
\\&=  \mathcal{J}(a_i a_j, a_{i'} a_{j'}).
\end{aligned}
\end{equation*}
\end{proof}

By Equation~\eqref{equation:add}, to prove Theorem \ref{theorem:Gab}, it is enough to prove 
\begin{equation*}
\begin{aligned}
&\left\{E\left(\widehat{a_i}|a_j,a_i\right), E\left(\widehat{a_{i'}}|a_{j'},a_{i'}\right)\right\}_{1,0}
= - s_{||}\left(a_i a_j,a_{i'} a_{j'}\right)\cdot E\left(\widehat{a_i}|a_{j'},a_i\right) \cdot E\left(\widehat{a_{i'}}|a_{j},a_{i'}\right).
\end{aligned}
\end{equation*}

The main technique that we use to prove the theorem is the following lemma.
\begin{lem}\cite[Lemma 5.6]{Su15}
\label{lemcalMB}
For $n\geq 2$, let $M=\left(c_s d_t\right)_{s,t=1}^n$ be a $(n\times n)$-matrix with $c_s, d_t \in \mathcal{P}$, let $M_{st}$ be the determinant of the matrix obtained from $M$ by deleting the $s$-th row and the $t$-th  column. Let $B \in \mathcal{Q}_n(\mathcal{P})$, we have
\begin{equation*}
\left\{\det M, B\right\} = \sum_{s=1}^n \sum_{t=1}^n (-1)^{s+t} \cdot \det M_{st} \cdot \left\{c_s d_t, B\right\}
\end{equation*}
in $\mathcal{Q}_n(\mathcal{P})$.
\end{lem}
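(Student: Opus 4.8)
The plan is to recognize the asserted identity as Jacobi's formula for the derivative of a determinant, applied to the $\mathbb{K}$-linear derivation obtained by bracketing against the fixed element $B$. The entire content beyond bookkeeping is the single structural fact that $\{\cdot, B\}$ is a derivation, so I would first isolate that.

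First I would observe that $D := \{\cdot, B\}$ is a $\mathbb{K}$-linear derivation on $\mathcal{Q}_n(\mathcal{P})$. On $\mathcal{Z}(\mathcal{P})$ the swapping bracket is defined by extending the generator formula through Leibniz's rule, so $\{AC, B\} = A\{C,B\} + C\{A,B\}$ and $D$ is $\mathbb{K}$-bilinear; by Theorem \ref{swappoisson} it is Poisson, and since $R_n(\mathcal{P})$ is a Poisson ideal the bracket descends to $\mathcal{Z}_n(\mathcal{P})$ and extends to the fraction field $\mathcal{Q}_n(\mathcal{P})$ by the quotient rule (and antisymmetry). In each passage the Leibniz property is preserved, so $D$ remains a derivation; as $\det M$ is a polynomial in the generators $c_s d_t$, only the first-slot product rule will actually be used on it.

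Next I would expand the determinant over permutations, exactly as in the proof of Lemma \ref{swc2}: writing the $(s,t)$-entry as $c_s d_t$, one has
\[
\det M = \sum_{\sigma \in S_n} \epsilon_\sigma \prod_{s=1}^n c_s d_{\sigma(s)}.
\]
Applying $D$ and using Leibniz's rule on each monomial gives
\[
D(\det M) = \sum_{\sigma \in S_n} \epsilon_\sigma \sum_{s=1}^n \Big(\prod_{s'\neq s} c_{s'} d_{\sigma(s')}\Big)\, D\big(c_s d_{\sigma(s)}\big).
\]
I would then reorganize this double sum by collecting, for each pair $(s,t)$, all contributions in which the derivation falls on the $(s,t)$-entry, i.e. those $\sigma$ with $\sigma(s)=t$:
\[
D(\det M) = \sum_{s=1}^n\sum_{t=1}^n \Big(\sum_{\sigma:\,\sigma(s)=t}\epsilon_\sigma \prod_{s'\neq s} c_{s'} d_{\sigma(s')}\Big)\, D(c_s d_t).
\]

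The crux, and the only delicate point, is identifying the inner coefficient with the signed minor. Here I would invoke the standard Laplace-expansion computation: the permutations $\sigma$ with $\sigma(s)=t$ are in sign-compatible bijection with the permutations of the remaining $n-1$ indices, and tracking the sign yields
\[
\sum_{\sigma:\,\sigma(s)=t}\epsilon_\sigma \prod_{s'\neq s} c_{s'} d_{\sigma(s')} = (-1)^{s+t}\det M_{st},
\]
the $(s,t)$-cofactor of $M$. Substituting $D = \{\cdot, B\}$ then gives the claimed formula. The main obstacle is purely this sign bookkeeping in the last step; everything else is a mechanical consequence of the derivation property. Finally, since the identity is established at the level of the product rule, it holds already in $\mathcal{Z}(\mathcal{P})$ (and in $\mathcal{Z}_n(\mathcal{P})$ after quotienting by the Poisson ideal) and descends verbatim to $\mathcal{Q}_n(\mathcal{P})$.
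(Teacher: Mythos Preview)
The paper does not supply its own proof of this lemma; it simply cites \cite[Lemma 5.6]{Su15}. Your argument is correct and is exactly the standard one: the Poisson bracket $\{\cdot,B\}$ is a derivation by construction, so Jacobi's formula for the derivative of a determinant applies, and the cofactor identification via Laplace expansion finishes it. There is nothing further to compare.
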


\begin{proof}[Proof of Theorem \ref{theorem:Gab}]
We prove the case (1) in Figure \ref{figure:parallel}. 

 Suppose that $(v_1,\cdots,v_n)$ ($(u_1,\cdots,u_n)$ resp.) is the right side $n$-tuple for the term on the left (right resp.) hand side in each $[\cdot,\cdot]$ bracket below. 
By Remark \ref{remark:rnt}, we arrange the points $(v_1,\cdots,v_n)$ ($(u_1,\cdots,u_n)$ resp.) in between two successive points of $\mathcal{P}$ where one of them is $a_j$ ($a_{j'}$ resp.) as in Figure \ref{figure:parallel}. 
For all the cases, such arrangements simplify the computation a lot, and allows us to compute in a similar way.

We compute $D=\left[\Delta\left(\left(\widehat{a_i},a_j\right), \left(v^n\right)\right), \Delta\left(\left(\widehat{a_{i'}},a_{j'}\right), \left(u^n\right)\right)\right]$. By Lemma \ref{lemcalMB}, for $\det M = \Delta\left(\left(\widehat{a_i},a_j\right), \left(v^n\right)\right)$ and $B =  \Delta\left(\left(\widehat{a_{i'}},a_{j'}\right), \left(u^n\right)\right)$, we have 
\begin{equation*}
D=\frac{1}{\det M \cdot B}\sum_{s=1}^n \sum_{t=1}^n (-1)^{s+t} M_{st} \left\{c_s v_t, B\right\},
\end{equation*}
 where $c_s\in\{\widehat{a_i},a_j\}$. We fix $c_s$ and compute the sum 
 \[\frac{1}{\det M \cdot B}\sum_{t=1}^n (-1)^{s+t} M_{st} \left\{c_s v_t, B\right\}\]
over $t$, where the summation is called {\em the sum over $t$ for $c_s$} for short:

\begin{figure}
\includegraphics[scale=0.7]{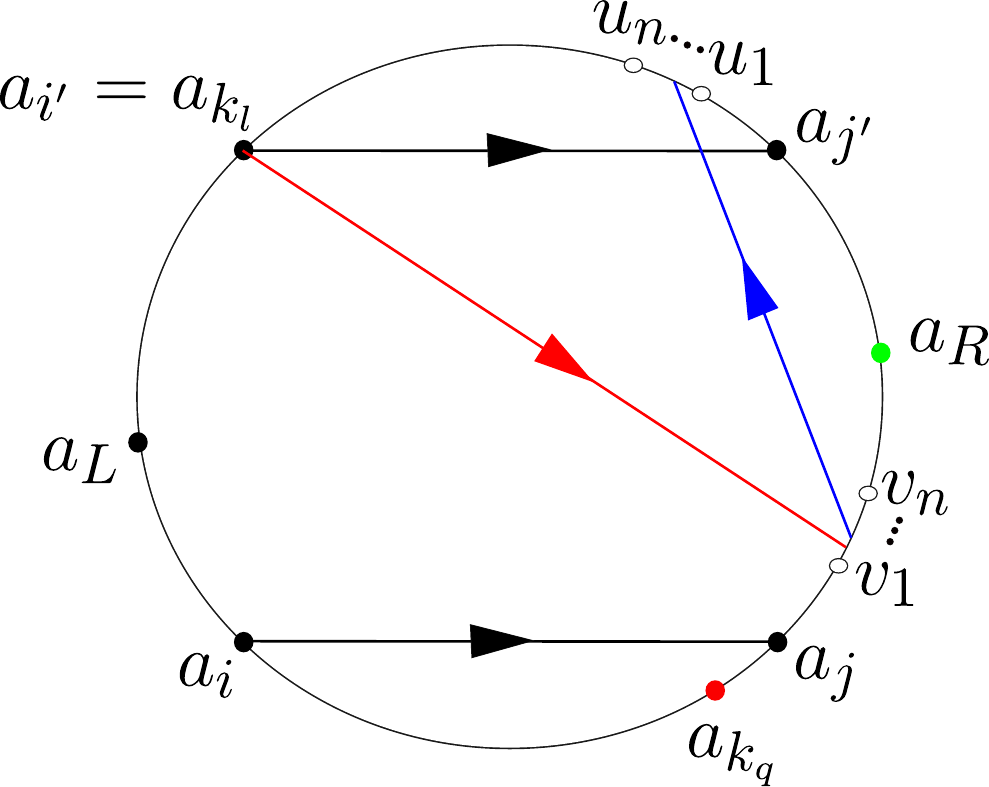}
\caption{$a_L \in A_L$ and $a_R \in A_R$.}
\label{figure:proof}
\end{figure}

\begin{enumerate}
\item
When $c_s =a_{i'}$, recall $\mathbf{I}= \{k_1, \cdots, k_n\}$ with $k_1<\cdots<k_n$ and suppose $i'=k_l$. 
%For any positive integers $p,q$, let 
%\[a_{\mathbf{I}}\backslash a_{k_p} \cup a_q:= a_{k_1},\cdots, a_{k_{p-1}}, a_q, a_{k_{p+1}},\cdots, a_{k_n}.\]
By Lemma \ref{swcal} Equation (\ref{equation:R}), we use $\Delta^R(a_{i'} v_t)$ with respect to the right side of $\overrightarrow{a_{i'} v_t}$ as in Figure \ref{figure:proof}, then
\begin{equation*}
\begin{aligned}
&\left\{a_{i'} v_t, \Delta\left(\left(\widehat{a_{i'}},a_{j'}\right), \left(u^n\right)\right)\right\}
\\&= \sum_{d=l+1}^q 1\cdot a_{k_d} v_t \cdot (-1)^{d-l}\cdot \Delta\left(\left(\widehat{a_{k_d}}, a_{j'}\right), \left(u^n\right)\right),
\end{aligned}
\end{equation*}
where $a_{k_q}$ is on the right side of $\overrightarrow{a_{i'} v_t}$ and $a_{k_{q+1}}$ is on the left side of $\overrightarrow{a_{i'} v_t}$.
Let $\widehat{a_{i'}, a_i}$ be $a_{k_1}, \cdots, a_{k_n}$ with $a_{i'},a_{i}$ removed. The sum over $t$ for $a_{i'}$ equals
\begin{equation}
\begin{aligned}
\label{equation:lt}
&\frac{1}{\det M \cdot B}\sum_{d=l+1}^q\sum_{t=1}^n (-1)^{l+t} M_{l t} \cdot  a_{k_d} v_t \cdot (-1)^{d-l}\cdot \Delta\left(\left(\widehat{a_{k_d}}, a_{j'}\right), \left(u^n\right)\right)
\\&= \frac{1}{\det M \cdot B}\sum_{d=l+1}^q (-1)^{n-1+d}\cdot \Delta\left(\left(\widehat{a_{i'},a_{i}}, a_{k_d}, a_{j}\right), \left(v^n\right)\right) \cdot \Delta\left(\left(\widehat{a_{k_d}}, a_{j'}\right), \left(u^n\right)\right).
\end{aligned}
\end{equation}
If $k_d \neq i$, then $a_{k_d} \in \{\widehat{a_{i'}, a_i}\}$. Thus $\Delta\left(\left(\widehat{a_{i'},a_{i}}, a_{k_d}, a_{j}\right), \left(u^n\right)\right)=0$. Hence the right hand side of Equation~\eqref{equation:lt} equals 

\begin{equation}
\label{equation:exp}
 \frac{\Delta\left(\left(\widehat{a_{i'}},a_{j}\right) , \left(v^n\right)\right)\cdot \Delta\left(\left(\widehat{a_{i}},a_{j'}\right), \left(u^n\right)\right)}{\Delta\left(\left(\widehat{a_i},a_j\right), \left(v^n\right)\right)\cdot \Delta\left(\left(\widehat{a_{i'}},a_{j'}\right), \left(u^n\right)\right)}.
\end{equation}
\item When $c_s=a_j$, we get 
\[\left\{a_j v_t, \Delta\left(\left(\widehat{a_{i'}},a_{j'}\right), \left(u^n\right)\right)\right\}=0.\]
Thus the sum over $t$ for $a_j$ equals $0$.

\item When $c_s=a_m\in \{a_{k_1},\cdots a_{k_n}\}\backslash \{a_{i}, a_{i'}\}$, let $A_L$ ($A_R$ resp.) be the subset of $\{a_{k_1},\cdots a_{k_n}\}\backslash \{a_{i}, a_{i'}\}$ which is on the left (right resp.) side of $\overrightarrow{v_s u_t}$ as in Figure \ref{figure:proof}. When we compute $\left\{a_m v_t, \Delta\left(\left(\widehat{a_{i'}},a_{j'}\right), \left(u^n\right)\right)\right\}$ by Lemma \ref{swcal} Equation (\ref{equation:R}), each term corresponds to $a_m$ swapping with one of the element $a_p$ in $\{\widehat{a_{i'}},a_{j'}\}$. If $a_m\neq a_p$, then $a_m\in\{\widehat{a_{i'}},a_{j'}\}\backslash \{a_p\}$. Thus $\Delta\left(\left(\widehat{a_{i'},a_p},a_m,a_{j'}\right), \left(u^n\right)\right)=0$. Hence we obtain
\begin{equation*}
\left\{a_m v_t, \Delta\left(\left(\widehat{a_{i'}},a_{j'}\right), \left(u^n\right)\right)\right\}=\begin{cases}

   \frac{1}{2}\cdot a_m v_t \cdot \Delta\left(\left(\widehat{a_{i'}},a_{j'}\right), \left(u^n\right)\right)  &\mbox{if $a_m \in A_L$,}\\

-\frac{1}{2}\cdot a_m v_t \cdot \Delta\left(\left(\widehat{a_{i'}},a_{j'}\right), \left(u^n\right)\right)  &\mbox{if $a_m \in A_R$.}
\end{cases} 
\end{equation*}
Thus the sum over $t$ for $a_m$ equals $\frac{1}{2}$ if $a_m\in A_L$, equals $-\frac{1}{2}$ if $a_m\in A_R$.
\end{enumerate}
Suppose $\#A_L=n_1$ and $\#A_R=n_2$. Combining the above results, we obtain
\begin{equation*}
\begin{aligned}
&\left[\Delta\left(\left(\widehat{a_i}|a_j\right), \left(v^n\right)\right), \Delta\left(\left(\widehat{a_{i'}}|a_{j'}\right), \left(u^n\right)\right)\right]
\\&=  \frac{ \Delta\left(\left(\widehat{a_{i'}},a_{j}\right), \left(v^n\right)\right) \cdot \Delta\left(\left(\widehat{a_{i}},a_{j'}\right) , \left(u^n\right)\right)}{\Delta\left(\left(\widehat{a_i},a_j\right), \left(v^n\right)\right)\cdot \Delta\left(\left(\widehat{a_{i'}},a_{j'}\right), \left(u^n\right)\right) }+  \frac{n_1-n_2}{2}
\\&= \frac{ \Delta\left(\left(\widehat{a_{i'}},a_{j}\right), \left(u^n\right)\right) \cdot \Delta\left(\left(\widehat{a_{i}},a_{j'}\right) , \left(u^n\right)\right)}{\Delta\left(\left(\widehat{a_i},a_j\right), \left(u^n\right)\right)\cdot \Delta\left(\left(\widehat{a_{i'}},a_{j'}\right), \left(u^n\right)\right) }+ \frac{n_1-n_2}{2}
\\&= \frac{E\left(\widehat{a_{i'}}|a_{j},a_{i'}\right) \cdot E\left(\widehat{a_{i}}|a_{j'},a_{i}\right)}{E\left(\widehat{a_i}|a_j,a_i\right)\cdot E\left(\widehat{a_{i'}}|a_{j'},a_{i'}\right)} +  \frac{n_1-n_2}{2}.
\end{aligned}
\end{equation*}
By similar computations, we get
\begin{equation*}
\begin{aligned}
&\left[\Delta\left(\left(\widehat{a_i},a_j\right), \left(v^n\right)\right), \Delta\left(\left(\widehat{a_{i'}},a_{i'}\right), \left(u^n\right)\right)\right]
= \frac{n_1+1-n_2}{2},
\end{aligned}
\end{equation*}

\begin{equation*}
\begin{aligned}
&\left[\Delta\left(\left(\widehat{a_i},a_i\right), \left(v^n\right)\right), \Delta\left(\left(\widehat{a_{i'}},a_{j'}\right), \left(u^n\right)\right)\right]
= \frac{n_1+1-n_2}{2},
\end{aligned}
\end{equation*}
and
\begin{equation*}
\begin{aligned}
&\left[\Delta\left(\left(\widehat{a_i},a_i\right), \left(v^n\right)\right), \Delta\left(\left(\widehat{a_{i'}},a_{i'}\right), \left(u^n\right)\right)\right]
= \frac{n_1+2-n_2}{2}.
\end{aligned}
\end{equation*}
Thus
\begin{equation*}
\begin{aligned}
&\left[E\left(\widehat{a_i}|a_j,a_i\right), E\left(\widehat{a_{i'}}|a_{j'},a_{i'}\right)\right]
=  \frac{E\left(\widehat{a_i}|a_{j'},a_i\right) \cdot E\left(\widehat{a_{i'}}|a_{j},a_{i'}\right)}{E\left(\widehat{a_i}|a_j,a_i\right)\cdot E\left(\widehat{a_{i'}}|a_{j'},a_{i'}\right)}+ \\& +  \frac{n_1-n_2}{2} -  \frac{n_1+1-n_2}{2} - \frac{n_1+1-n_2}{2} +   \frac{n_1+2-n_2}{2}
\\& = \frac{E\left(\widehat{a_i}|a_{j'},a_i\right) \cdot E\left(\widehat{a_{i'}}|a_{j},a_{i'}\right)}{E\left(\widehat{a_i}|a_j,a_i\right)\cdot E\left(\widehat{a_{i'}}|a_{j'},a_{i'}\right)}.
\end{aligned}
\end{equation*}
In this case, since $s_{||}\left(a_ia_j,a_{i'}a_{j'}\right)=-1$, we obtain
\begin{equation*}
\begin{aligned}
\left\{E\left(\widehat{a_i}|a_j,a_i\right), E\left(\widehat{a_{i'}}|a_{j'},a_{i'}\right)\right\}
= - s_{||}\left(a_i a_j,a_{i'} a_{j'}\right)\cdot E\left(\widehat{a_i}|a_{j'},a_i\right) \cdot E\left(\widehat{a_{i'}}|a_{j},a_{i'}\right).
\end{aligned}
\end{equation*}
The proof for the other cases are similar with respect to the arrangements in Figure \ref{figure:parallel}. All the terms are cancelled out except the term in Equation~\eqref{equation:exp} is replaced by
\begin{equation*}
 - s_{||}\left(a_i,a_j,a_{i'},a_{j'}\right)\cdot\frac{\Delta\left(\left(\widehat{a_{i'}},a_{j}\right) , \left(v^n\right)\right)\cdot \Delta\left(\left(\widehat{a_{i}},a_{j'}\right), \left(u^n\right)\right)}{\Delta\left(\left(\widehat{a_i},a_j\right), \left(v^n\right)\right)\cdot \Delta\left(\left(\widehat{a_{i'}},a_{j'}\right), \left(u^n\right)\right)}.
\end{equation*}
Note that for the case (3) in Figure \ref{figure:parallel}, the contribution is the same as above, but coming from the fact
\begin{equation*}
\left[a_j v_t, \Delta\left(\left(\widehat{a_{i'}},a_{j'}\right), \left(u^n\right)\right)\right]=-\frac{1}{2}.
\end{equation*}

Hence we obtain for all the cases
\begin{equation*}
\begin{aligned}
\left\{E\left(\widehat{a_i}|a_j,a_i\right), E\left(\widehat{a_{i'}}|a_{j'},a_{i'}\right)\right\}
= - s_{||}\left(a_i,a_j,a_{i'},a_{j'}\right)\cdot E\left(\widehat{a_i}|a_{j'},a_i\right) \cdot E\left(\widehat{a_{i'}}|a_{j},a_{i'}\right).
\end{aligned}
\end{equation*}

Combing with Lemma \ref{lemma:01} and Equation~\eqref{equation:pgnr}, we conclude that $\theta_{\alpha, \beta}$ is Poisson with respect to $\{\cdot,\cdot\}_{B_{\alpha,\beta}}$ and $\{\cdot,\cdot\}_{\beta-\alpha, \alpha + \beta}$. 
\end{proof}

\section*{Acknowledgements}
I thank Yau Mathematical Sciences Center at Tsinghua University and Luxembourg University for their hospitality.

%    Bibliographies can be prepared with BibTeX using amsplain,
%    amsalpha, or (for "historical" overviews) natbib style.
\bibliographystyle{amsplain}
%    Insert the bibliography data here.

\end{document}